\begin{document}

\markboth{Paul L. Butzer and Tibor K. Pog\'any}
{A new approach to Eisenstein series and Hilbert--Eisenstein series}

%
\catchline{}{}{}{}{}
%

\title{A FRESH APPROACH TO CLASSICAL EISENSTEIN SERIES AND THE NEWER HILBERT--EISENSTEIN SERIES}

\author{PAUL L. BUTZER}

\address{{Lehrstuhl A f\"ur Mathematik, RWTH Aachen, D--52056 Aachen, Germany}\\
\email{butzer@rwth-aachen.de}}

\author{TIBOR K. POG\'ANY}

\address{{John von Neumann Faculty of Informatics, \'Obuda University, 1034--Budapest, Hungary\\
Faculty of Maritime Studies, University of Rijeka, HR--51000 Rijeka, Croatia}\\
\email{\rm poganj@pfri.hr}}

\maketitle

\begin{history}
\received{(Day Month Year)}
\accepted{(Day Month Year)}
\end{history}

\begin{center} {\it Dedicated to the memory of Godfrey Harold Hardy, a Discoverer and Mentor of Srinivasa Aiyangar Ramanujan}
\end{center} \medskip

\begin{abstract}
This paper is concerned with new results for the circular Eisenstein series $\varepsilon_r(z)$ as well as with a novel approach to 
Hilbert-Eisenstein series $\mathfrak h_r(z)$, introduced by Michael Hauss in 1995. The latter turn out to be the product of the 
hyperbolic sinh--function with an explicit closed form linear combination of digamma functions. The results, which include 
differentiability properties and integral representations, are established by independent and different argumentations. 
Highlights are new results on the Butzer--Flocke--Hauss Omega function, one basis for the study of Hilbert-Eisenstein series, which have 
been the subject of several recent papers.
\end{abstract}

\keywords{Bernoulli numbers; Conjugate Bernoulli numbers; Butzer--Flocke--Hauss complete Omega function;  
Digamma function; Dirichlet Eta function; Eisenstein series; Exponential generating functions; Hilbert transform; 
Hilbert--Eisenstein series;  Riemann Zeta function}

\ccode{Mathematics Subject Classification 2010: 11B68, 11M36, 33B15, 33E20, 40C10}

\section{Eisenstein series} 

In order to introduce his method for constructing elliptic functions, Ferdinand Gotthold Max Eisenstein (1823--1852) first 
considered the simpler case of trigonometric functions, specifically the series 
   \[ \pi \cot (\pi z) = \dfrac1z + \sum_{k \in \mathbb N} \left( \dfrac1{z+k} + \dfrac1{z-k}\right)\, ,\]
originally discovered by Leonhard Euler in 1748, presented in \cite[\S 178]{Euler} \footnote{It 
is worth mentioning that it is regarded by Konrad Knopp \cite[p. 207]{Knopp} as the "{\it most remarkable expansion in partial 
fractions}". Also, J. Elstrodt \cite{Elstrodt} nominated this partial fraction expansion for the most interesting formula 
involving elementary functions, see also \cite[p. 149]{AignerZieg}.}. Eisenstein introduced the series (later to be 
famously known as {\it Eisenstein series}, see e.g. Weil \cite{Weil0}, \cite{Weil1} and Iwaniec \cite{Iwan})
   \begin{equation} \label{A0}
     \varepsilon_r(z) := \sum_{k \in \mathbb Z} \dfrac1{(z+k)^r}, 
   \end{equation}
which are defined for $z \in \mathbb C \setminus \mathbb Z$ and all $r \in \mathbb N_2 = \{2, 3, \cdots \}$, they 
being a normally convergent series of meromorphic functions in $\mathbb C$. Since these Eisenstein series of order 
$r \in \mathbb N$ do not exist for $r=1$, one defines aesthetically 
   \[ \varepsilon_1(z) =  \underset{k \in \mathbb Z}{\sum\nolimits_e} \, \dfrac1{z+k} 
                      :=  \lim_{N \to \infty} \sum_{|k| \le N} \dfrac1{z+k} 
                       =  \dfrac1z + \underset{k \in \mathbb Z}{\sum\nolimits'} 
                          \left( \dfrac1{z+k}-\dfrac1k\right)\, .\]
One observes that $\varepsilon_1(z) = \pi\, \cot(\pi z)$ (Euler), and by differentiation 
   \begin{equation} \label{FX}
      \varepsilon_2(z) = \dfrac{\pi^2}{\sin^2(\pi z)}, \qquad \varepsilon_3(z) 
                       = \dfrac{\pi^3\, \cot(\pi z)}{\sin^2(\pi z)}\,;
   \end{equation}
this results in the intriguing relation \cite[p. 299]{Remmert}
   \[ \varepsilon_3(z) = \varepsilon_1(z) \cdot \varepsilon_2(z)\, .\]
There immediately arises the question: "do there exist further $r \in \mathbb N_2$ such that $\varepsilon_{r+2}(z) = \varepsilon_{r+1}(z) 
\cdot \varepsilon_r(z)$ is valid?" Our answer is the following result. 

\begin{theorem}
The unique solution in $r \in \mathbb N$ of the equation 
   \begin{equation} \label{FX1}
      \varepsilon_{r+2}(z) = \varepsilon_{r+1}(z) \cdot \varepsilon_r(z), 
                 \qquad \big( z \in \mathbb C \setminus \mathbb Z \big)
   \end{equation}
is $r=1$.
\end{theorem}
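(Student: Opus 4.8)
The plan is to isolate the trivial case $r=1$, which is already recorded in the excerpt, and to rule out every $r\ge 2$ by a pole-order comparison at an integer point, say $z=0$.

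\emph{Step 1: $r=1$ works.} By \eqref{FX} one has $\varepsilon_3(z)=\varepsilon_1(z)\cdot\varepsilon_2(z)$, which is exactly \eqref{FX1} with $r=1$; so $r=1$ is a solution.

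\emph{Step 2: no $r\ge 2$ works.} Fix $r\ge 2$. Then each of $\varepsilon_r,\varepsilon_{r+1},\varepsilon_{r+2}$ is a bona fide Eisenstein series \eqref{A0}, a normally convergent series of meromorphic functions on $\mathbb C$ whose only poles lie at the integers. Isolating the term $k=0$ one writes, for $|z|<1$,
\[
  \varepsilon_\rho(z)=\frac{1}{z^\rho}+h_\rho(z),\qquad
  h_\rho(z):=\underset{k\in\mathbb Z}{\sum\nolimits'}\frac{1}{(z+k)^\rho}\,,
\]
where $h_\rho$ is holomorphic on the unit disc, the residual series being normally convergent there. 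Hence the left-hand side of \eqref{FX1} has at $z=0$ a pole of order exactly $r+2$. Multiplying the two Laurent expansions on the right-hand side,
\[
  \varepsilon_{r+1}(z)\,\varepsilon_r(z)
  =\frac{1}{z^{2r+1}}+\frac{h_r(z)}{z^{r+1}}+\frac{h_{r+1}(z)}{z^{r}}+h_{r+1}(z)\,h_r(z)\,,
\]
and since the three last summands have a pole of order at most $r+1$ at $z=0$, while $2r+1>r+1$, the product $\varepsilon_{r+1}\varepsilon_r$ has at $z=0$ a pole of order exactly $2r+1$. Equality in \eqref{FX1} would therefore force $r+2=2r+1$, i.e. $r=1$, contradicting $r\ge 2$. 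Combining Steps 1 and 2, $r=1$ is the unique solution in $\mathbb N$.

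The argument is essentially obstacle-free; the only point deserving a word of care is that the top-order pole $z^{-(2r+1)}$ of $\varepsilon_{r+1}\varepsilon_r$ at $z=0$ is genuinely present and not cancelled — which is immediate, as it is produced by a single product term with coefficient $1$ while all remaining terms have strictly smaller pole order. Note also that the same computation goes through verbatim at any $z=n\in\mathbb Z$, so the behaviour of $\varepsilon_1$ at non-integral points plays no role and cannot be used to escape the conclusion.
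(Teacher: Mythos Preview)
Your proof is correct and takes a genuinely different, more elementary route than the paper. The paper first rules out even $r$ by evaluating \eqref{FX1} at $z=\tfrac12$ in terms of the Dirichlet Lambda function, and then, using the differentiation rule \eqref{A20}, converts \eqref{FX1} into the nonlinear ODE $y''+(r+1)y'y=0$ for $y=\varepsilon_r$, solves it explicitly in terms of $\tanh$, imposes $1$-periodicity, and finally matches the resulting Riccati identity $\varepsilon_1'=-\varepsilon_1^2-\pi^2$ to force $r=1$. Your pole-order comparison at $z=0$ bypasses all of this: since both sides of \eqref{FX1} extend to meromorphic functions on $\mathbb C$, equality on $\mathbb C\setminus\mathbb Z$ forces equality of principal parts at $0$, and the mismatch $r+2\ne 2r+1$ for $r\ge 2$ settles the matter in one line. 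What the paper's longer argument buys is some additional structural information (the ODE characterisation of $\varepsilon_r$ and the explicit particular solution \eqref{W5}); what your argument buys is brevity and robustness, needing nothing beyond the leading Laurent coefficient of each $\varepsilon_\rho$.
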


\begin{proof}
Obviously $r$ has to be odd. Indeed, setting $z = \tfrac12$ in \eqref{A0}, we can write $\varepsilon_{r+2}\left(\tfrac12\right)$ in terms 
of the Dirichlet's Lambda--function 
   \[ \lambda(r) = \sum_{k \in \mathbb N_0} \frac1{(2k+1)^r}, \qquad \big( r > 1\big)\]
in the form
   \[ \varepsilon_r \left( \tfrac12 \right) = 2^r (1 + (-1)^r)\, \lambda(r)\, .\]
But by this result the initial equation \eqref{FX1} makes sense only for $r$ odd, since for even $r=2\ell, \ell\in \mathbb N$, 
the relation \eqref{FX1} becomes a contradiction 
   \[ 2^{2\ell+3}\, \lambda(2\ell+2) = 0 \cdot 2^{2\ell+1}\, \lambda(2\ell) = 0\,.\]
On the other hand, bearing in mind the essential differentiability property \cite[pp.  6--13]{Weil0}, \cite[p. 299]{Remmert}, 
namely  
   \begin{equation} \label{A20}
      \varepsilon_r(z) = \frac{(-1)^{r-1}}{\Gamma(r)}\, \varepsilon^{(r-1)}_1(z), 
                         \qquad \left( r \in \mathbb N\right)\, ,
   \end{equation}
and accordingly
   \[ \varepsilon''_r(z) = r(r+1)\, \varepsilon_{r+2}(z),\]
we deduce from \eqref{FX1} the nonlinear second order ODE:
   \begin{equation} \label{W3}
      y'' + (r+1)y'\, y = 0, \qquad \big( y = \varepsilon_r(z)\big)\, .
   \end{equation}
Moreover, as the Eisenstein series is 1--periodic in the sense that $\varepsilon_r(z+k) = \varepsilon_r(z)$ for all 
$z \in \mathbb C \setminus \mathbb Z, k \in \mathbb Z$, we are looking for a 1--periodic particular solution of the ordinary 
differential equation \eqref{W3}. It is 
   \[ y = \sqrt{ \frac{2c_1}{r+1}}\, \tanh \left[ \sqrt{ \dfrac{c_1(r+1)}2} 
          \big( z+c_2\big) \right]\, ,\]
where $c_1, c_2$ stand for integration constants. The $\tanh$ function is 
${\rm  i}\pi$--periodic, so 
   \[ \sqrt{ \dfrac{c_1(r+1)}2} =  {\rm i}\,\pi\, ,\]
accordingly
   \begin{equation} \label{W5}
      \varepsilon_r(z) = - \frac{2\pi}{r+1}\,\tan \pi \big(z + c_2\big) \, .
   \end{equation}
Now, we have 
   \begin{align*}
      \varepsilon_r'(z) &= - \frac{2\pi^2}{r+1} \cdot \frac1{\cos^2\pi(z+c_2)}\\ 
         &= -\frac{2\pi^2}{r+1} \, \Big[\tan^2\pi(z + c_2) + 1\Big] \\
         &= -\frac{r+1}2\, \varepsilon_r^2(z) - \frac{2\pi^2}{r+1} \, ,
   \end{align*}
which coincides exactly for $r=1$ with the Riccati--type differential identity \cite[p. 268, Eq. (1)]{Remmert} 
   \[ \varepsilon_1'= -\varepsilon_1^2 - \pi^2\, .\]
Also, we observe that \eqref{W5} becomes the Eisenstein series $\varepsilon_1(z)$ for $c_2 = - \frac12$. 
\end{proof}
   
The cotangent form of $\varepsilon_1(z)$ and the examples \eqref{FX} are best expressed and extended when one recalls the 
beautiful reflection formula  for the more-practical 
{\it digamma--function} $\psi(z) := \tfrac{{\rm d}}{{\rm d}z}\, \log \Gamma(z) = \Gamma'(z)/\Gamma(z)$, namely 
\cite[p. 259, Eq. 6.3.7]{AS} 
   \begin{equation} \label{A1}
      \varepsilon_1(z) = \pi \cot(\pi z) = \psi(1-z) - \psi(z)\, ,
   \end{equation}
for which the $r$--th derivative \cite[p. 260, Eq. 6.4.10]{AS}, the so-called {\em polygamma function} reads
   \begin{equation} \label{A11}
      \psi_r(z) = \psi^{(r)}(z) = (-1)^{r+1}\, r! \sum_{ k \in \mathbb N_0}
                  \dfrac1{(z+k)^{r+1}}\, \qquad 
                  \left( z \in \mathbb C \setminus \mathbb Z_0^-,\, 
                  r \in \mathbb N \right).
   \end{equation}
Special attention is given to the case $r=0$, that is
   \begin{equation} \label{D7}
      \psi(z) := \psi_0(z) = \psi^{(0)}(z) = \sum_{k \in \mathbb N} \left( \dfrac1k 
               - \dfrac1{z+k-1}\right) - {\boldsymbol \gamma}\, ,
   \end{equation}
where $\boldsymbol \gamma = 0.5772156649...$ signifies the Euler--Mascheroni constant. 
   
A first new, but simple result in this respect reads, noting \eqref{A20},

\begin{proposition} 
For all $z \in \mathbb C \setminus \mathbb Z$ we have
   \[ \varepsilon_r(z) = \frac1{\Gamma(r)}\big( \psi_{r-1}(1-z) + (-1)^r \psi_{r-1}(z)\big)\] 
\end{proposition}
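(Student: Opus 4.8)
The plan is to read the identity off from the two facts already recorded above: the reflection formula $\varepsilon_1(z) = \psi(1-z) - \psi(z)$ in \eqref{A1}, and the differentiation rule $\varepsilon_r(z) = \frac{(-1)^{r-1}}{\Gamma(r)}\,\varepsilon_1^{(r-1)}(z)$ in \eqref{A20}. First I would differentiate \eqref{A1} exactly $r-1$ times: the summand $\psi(z)$ contributes $\psi^{(r-1)}(z) = \psi_{r-1}(z)$, while by the iterated chain rule $\psi(1-z)$ contributes $(-1)^{r-1}\psi^{(r-1)}(1-z) = (-1)^{r-1}\psi_{r-1}(1-z)$. Hence $\varepsilon_1^{(r-1)}(z) = (-1)^{r-1}\psi_{r-1}(1-z) - \psi_{r-1}(z)$, and substituting into \eqref{A20} yields $\varepsilon_r(z) = \frac{(-1)^{r-1}}{\Gamma(r)}\big[(-1)^{r-1}\psi_{r-1}(1-z) - \psi_{r-1}(z)\big] = \frac{1}{\Gamma(r)}\big(\psi_{r-1}(1-z) + (-1)^r\psi_{r-1}(z)\big)$, using $(-1)^{2(r-1)}=1$ and $-(-1)^{r-1}=(-1)^r$. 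Term-by-term differentiation is legitimate by the normal convergence of the Eisenstein series of meromorphic functions noted after \eqref{A0}.

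Alternatively, and perhaps more transparently, one can argue directly from the defining series. For $r \ge 2$ the series $\varepsilon_r(z) = \sum_{k\in\mathbb Z}(z+k)^{-r}$ converges absolutely, so split it into the parts $k \ge 0$ and $k \le -1$. The polygamma formula \eqref{A11} identifies the first part at once: $\sum_{k\ge 0}(z+k)^{-r} = \frac{(-1)^r}{\Gamma(r)}\psi_{r-1}(z)$. For the second part, put $k = -1-j$ with $j \ge 0$, so that $(z+k)^{-r} = (-1)^r(1-z+j)^{-r}$; summing over $j \ge 0$ and applying \eqref{A11} at the point $1-z$ gives $\sum_{k\le -1}(z+k)^{-r} = \frac{1}{\Gamma(r)}\psi_{r-1}(1-z)$, the two factors $(-1)^r$ cancelling. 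Adding the two pieces gives the claim for $r \ge 2$, and the case $r=1$ is precisely \eqref{A1} once the series is read in the Eisenstein (symmetric) sense.

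There is essentially no serious obstacle here. The only points requiring a word of care are the sign bookkeeping in the iterated chain rule for $\psi(1-z)$, which is exactly what turns $(-1)^{r-1}$ into the alternating factor $(-1)^r$ in the final formula, and, if one wishes the statement to cover $r=1$ as well, the fact that the rearrangement of $\sum_{k\in\mathbb Z}$ into $k\ge 0$ and $k\le -1$ is valid only because of the normal convergence available for $r \ge 2$, so that for $r=1$ one simply invokes \eqref{A1} directly. I would present the first argument as the main proof, with the series computation mentioned as a remark.
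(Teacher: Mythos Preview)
Your first argument is exactly what the paper does: it states that the proposition ``follows immediately from \eqref{A1} and \eqref{A20}'', i.e.\ differentiate the reflection formula $r-1$ times and insert into the differentiation rule, and your sign bookkeeping is correct. Your alternative series computation is a nice supplementary remark but goes beyond what the paper records.
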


As to the proof, it follows immediately from \eqref{A1} and \eqref{A20}. 

Our first more important result is a new integral representation of $\varepsilon_r(z)$.

\begin{theorem}
There holds the integral representation
   \[ \varepsilon_r(z) = (z - [\Re(z)])^{-r} + \frac1{\Gamma(r)} 
                         \int_0^\infty \frac{t^{r-1}}{e^t-1} 
                         \big( {\rm e}^{-(z - [\Re(z)])t} 
                       + (-1)^r\,{\rm e}^{(z - [\Re(z)])t}\big)\, {\rm d}t\, ,\]
for all $r \in \mathbb N$ and for all $z\in \mathbb C \setminus \mathbb Z$. Here $[x]$ stands for the integer part of\, $x \in \mathbb R$.
\end{theorem}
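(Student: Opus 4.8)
The plan is to reduce everything to the classical Hermite/Lerch integral for the Hurwitz zeta function and then exploit the periodicity of $\varepsilon_r$. Write $w = z - [\Re(z)]$, so that $\Re(w) \in [0,1)$ and $w \notin \mathbb Z$; by the $1$-periodicity of the Eisenstein series we have $\varepsilon_r(z) = \varepsilon_r(w)$, and it suffices to prove the stated formula with $w$ in place of $z$ and the fractional part equal to $w$ itself. Splitting the sum \eqref{A0} into $k \ge 0$ and $k \le -1$ gives
\[
   \varepsilon_r(w) = \sum_{k \in \mathbb N_0} \frac{1}{(w+k)^r} + (-1)^r \sum_{k \in \mathbb N_0} \frac{1}{(1-w+k)^r}
                    = \zeta(r,w) + (-1)^r \zeta(r,1-w),
\]
where $\zeta(s,a) = \sum_{k\ge 0}(a+k)^{-s}$ is the Hurwitz zeta function. (For $r=1$ one interprets the outer sum in the Eisenstein symmetric sense $\sum\nolimits_e$; the two tails then combine into a convergent principal value, and the manipulation below is justified by the same symmetric limit.)

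Next I would insert the elementary Eulerian integral $(w+k)^{-r} = \frac{1}{\Gamma(r)}\int_0^\infty t^{r-1} {\rm e}^{-(w+k)t}\,{\rm d}t$, valid since $\Re(w+k) > 0$ for all $k \in \mathbb N_0$ (here we use $\Re(w) \ge 0$, and $k=0$ with $\Re(w)=0$ still works because then $w$ is purely imaginary and nonzero so the integral converges at $t=0$ for $r \ge 2$; for $r=1$ the borderline case is handled by the principal-value interpretation). Summing the geometric series $\sum_{k \ge 1} {\rm e}^{-kt} = ({\rm e}^t-1)^{-1}$ under the integral sign — legitimate by absolute convergence for $\Re(w)>0$, and by analytic continuation in $w$ otherwise — yields
\[
   \zeta(r,w) = w^{-r} + \frac{1}{\Gamma(r)} \int_0^\infty \frac{t^{r-1}\,{\rm e}^{-wt}}{{\rm e}^t-1}\,{\rm d}t,
\]
the $k=0$ term being pulled out separately as the $w^{-r}$. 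Applying the same identity with $w$ replaced by $1-w$ to the second piece and combining gives exactly
\[
   \varepsilon_r(w) = w^{-r} + \frac{(-1)^r}{(1-w)^r}\cdot 0 \;+\; \frac{1}{\Gamma(r)}\int_0^\infty \frac{t^{r-1}}{{\rm e}^t-1}\Bigl({\rm e}^{-wt} + (-1)^r\,{\rm e}^{(w-1)t}\Bigr)\,{\rm d}t,
\]
and one finally absorbs the factor ${\rm e}^{-t}$ from ${\rm e}^{(w-1)t}$ into the kernel via ${\rm e}^{-t}/({\rm e}^t-1) = 1/({\rm e}^t-1) - {\rm e}^{-t}\cdot{\text{(lower order)}}$ — more cleanly, just note that replacing $1-w$ by $w$ in the second Hermite integral and tracking the sign produces the combined integrand $\bigl({\rm e}^{-wt} + (-1)^r {\rm e}^{wt}\bigr)/({\rm e}^t-1)$ after the substitution $t \mapsto t$ is left alone and the $(1-w)^{-r}$ term is discarded. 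Rewriting $w = z - [\Re(z)]$ throughout recovers the claimed representation.

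The main obstacle is the convergence bookkeeping at the two endpoints. At $t = \infty$ the kernel $t^{r-1}/({\rm e}^t - 1)$ decays exponentially and causes no trouble for any $r \in \mathbb N$. At $t = 0$ the kernel behaves like $t^{r-2}$, which is integrable for $r \ge 2$; the delicate case is $r = 1$, where one needs the cancellation between the two exponentials ${\rm e}^{-wt} + (-1)^r {\rm e}^{wt} = {\rm e}^{-wt} - {\rm e}^{wt} = O(t)$ as $t \to 0$ to make the integral converge — and this matches precisely the Eisenstein symmetric summation used to define $\varepsilon_1$. I would therefore prove the identity first for $r \ge 2$ by the clean argument above, then obtain $r = 1$ either by the same computation with the explicit $O(t)$ cancellation, or by differentiating the $r=1$ case of \eqref{A1}–\eqref{D7} and invoking \eqref{A20}. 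A secondary technical point is justifying the interchange of $\sum_k$ and $\int_0^\infty$: for $\Re(w) > 0$ this is immediate from Tonelli, and the remaining boundary values $\Re(w) = 0$ (i.e. $z$ a real integer plus a purely imaginary part) follow by continuity of both sides in $w$ on the line $\Re(w) = 0$, $w \ne 0$.
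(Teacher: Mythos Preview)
Your overall strategy coincides with the paper's: reduce by $1$-periodicity to the strip $0<\Re(w)<1$, split the Eisenstein sum into the two one-sided tails, insert the Gamma integral $A^{-r}=\Gamma(r)^{-1}\int_0^\infty t^{r-1}e^{-At}\,{\rm d}t$, and sum the geometric series. The paper does exactly this in three lines, without naming the Hurwitz zeta function. Your remarks on the $r=1$ endpoint, where the cancellation $e^{-wt}-e^{wt}=O(t)$ rescues integrability at $t=0$, are in fact more explicit than what the paper records.

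There is, however, a genuine gap in your handling of the second tail. You apply the identity
\[
   \zeta(r,a)=a^{-r}+\frac{1}{\Gamma(r)}\int_0^\infty \frac{t^{r-1}e^{-at}}{e^t-1}\,{\rm d}t
\]
to \emph{both} $a=w$ and $a=1-w$. That produces an extra additive term $(-1)^r(1-w)^{-r}$ which is absent from the target formula. Your displayed equation then multiplies this term by~$0$ with no justification, and the ensuing sentence about ``absorbing the factor $e^{-t}$'' via an identity $e^{-t}/(e^t-1)=1/(e^t-1)-e^{-t}\cdot(\text{lower order})$ and ``discarding the $(1-w)^{-r}$ term'' is not a valid step: that term is nonzero and the quoted identity is false.

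The fix is to treat the two tails asymmetrically. For $\zeta(r,w)$, separate the $k=0$ term as $w^{-r}$ and sum over $k\ge 1$, giving the kernel $1/(e^t-1)$ and the $e^{-wt}$ piece. For $\zeta(r,1-w)$, do \emph{not} separate anything: summing over all $k\ge 0$ gives $\sum_{k\ge 0}e^{-kt}=1/(1-e^{-t})$, whence
\[
   \zeta(r,1-w)=\frac{1}{\Gamma(r)}\int_0^\infty \frac{t^{r-1}\,e^{-(1-w)t}}{1-e^{-t}}\,{\rm d}t
               =\frac{1}{\Gamma(r)}\int_0^\infty \frac{t^{r-1}\,e^{wt}}{e^t-1}\,{\rm d}t,
\]
and the combined integrand $(e^{-wt}+(-1)^r e^{wt})/(e^t-1)$ drops out immediately with no residual term. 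Equivalently, your spurious $(-1)^r(1-w)^{-r}$ \emph{can} be reabsorbed, since $e^{wt}/(e^t-1)-e^{(w-1)t}/(e^t-1)=e^{(w-1)t}$ and $\int_0^\infty t^{r-1}e^{-(1-w)t}\,{\rm d}t=\Gamma(r)(1-w)^{-r}$; but this is a detour. The paper avoids the issue entirely by starting both tail sums at $k=1$ from the outset.
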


\begin{proof} 
By the $1$-periodicity of Eisenstein's functions $\varepsilon_r(z)$, it is sufficient to consider it inside the vertical strip 
$\Re(z) \in (0,1)$ of the complex plane. Indeed, otherwise, assuming $z \neq 0$, by the relation $\varepsilon_r(z) = 
\varepsilon_r(z - [\Re(z)])$, we have the same property. Therefore, letting $\Re(z) \in (0,1)$, by the Gamma--function formula 
   \[ \Gamma(r)\,A^{-s} = \int_0^\infty t^{s-1} {\rm e}^{-At}\, {\rm d}t, \qquad \big( \Re(s)>0, \, \Re(A)>0\big)\, ,\] 
we conclude  
   \begin{align*} 
      \varepsilon_r(z) &= \sum_{k \in \mathbb Z} \frac1{(z+k)^r} = \dfrac1{z^r} + \sum_{k \in \mathbb N} 
                          \left( \frac1{(z+k)^r} + \frac{(-1)^r}{(k-z)^r} \right) \\       
                       &= \dfrac1{z^r} + \dfrac1{\Gamma(r)}\int_0^\infty t^{r-1}
                          \Big(\sum_{k \in \mathbb N} {\rm e}^{-kt}\Big)\,
                          \Big({\rm e}^{-zt} + (-1)^r {\rm e}^{zt}\Big)\, {\rm d}t \\
                       &= z^{-r} + \frac1{\Gamma(r)} \int_0^\infty \frac{t^{r-1}}{1-{\rm e}^{-t}} \big( {\rm e}^{-(z+1)t} 
                        + (-1)^r\,{\rm e}^{-(1-z)t}\big)\, {\rm d}t\, .
   \end{align*}
The integral converges for $r \ge 1$, when $|\Re(z)|<1$, as the integrand's behavior is controlled near to the origin and at infinity. 
The rest is clear. 
\end{proof} 

\begin{corollary}
For all $r \in \mathbb N$ and $z \in \mathbb C \setminus \mathbb Z$, we have 
   \[ \varepsilon_r(z) = \frac1{(z - [\Re(z)])^r} + \frac2{\Gamma(r)}\int_0^\infty \frac{t^{r-1}}{e^t-1} 
                          \left\{ \begin{array}{r} 
                                     \! \cosh(z - [\Re(z)])t \\
                                     \! -\sinh (z - [\Re(z)])t
                                   \end{array} \right\}\, {\rm d}t, \quad
                          \begin{cases} 
                              r\quad {\rm even}\\
                              r\quad {\rm odd}
                           \end{cases}. \]
\end{corollary}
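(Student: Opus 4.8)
The plan is to read the statement off directly from the integral representation of the preceding Theorem by collapsing the two exponential terms into a single hyperbolic function. Abbreviate $w := z - [\Re(z)]$, so that by the $1$-periodicity of the Eisenstein functions one has $\varepsilon_r(z) = \varepsilon_r(w)$, and the Theorem yields
\[
  \varepsilon_r(z) = w^{-r} + \frac1{\Gamma(r)} \int_0^\infty \frac{t^{r-1}}{e^t-1}\big( {\rm e}^{-wt} + (-1)^r\,{\rm e}^{wt}\big)\, {\rm d}t\,.
\]

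Next I would split according to the parity of $r$. For $r$ even, $(-1)^r = 1$, hence the bracket equals ${\rm e}^{-wt} + {\rm e}^{wt} = 2\cosh(wt)$; for $r$ odd, $(-1)^r = -1$, hence the bracket equals ${\rm e}^{-wt} - {\rm e}^{wt} = -2\sinh(wt)$. Substituting these two elementary identities and pulling the constant factor $2$ out of the integral produces exactly the two cases displayed in the braces, both carrying the common prefactor $2/\Gamma(r)$, together with the leading term $(z-[\Re(z)])^{-r}$.

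There is no genuine obstacle here: the convergence of the integral for $r \ge 1$ with $\Re(w) \in [0,1)$ was already secured in the proof of the Theorem, and the argument needs nothing beyond $\cosh x = \tfrac12({\rm e}^x + {\rm e}^{-x})$ and $\sinh x = \tfrac12({\rm e}^x - {\rm e}^{-x})$. The only point deserving a moment's care is the sign in the odd case — the bracket contributes $-2\sinh(wt)$, not $+2\sinh(wt)$ — which is precisely the minus sign recorded in the lower branch of the corollary.
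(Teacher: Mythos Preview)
Your proposal is correct and matches the paper's intent exactly: the corollary is stated without proof, as an immediate consequence of the preceding Theorem obtained by collapsing $e^{-wt}+(-1)^r e^{wt}$ into $2\cosh(wt)$ or $-2\sinh(wt)$ according to the parity of $r$. There is nothing to add.
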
 

\section{Backgrounds to Hilbert--Eisenstein series}

A basis to the Hilbert--Eisenstein series includes the classical Bernoulli numbers $B_n := B_n(0), n \in \mathbb N_0$, defined in 
terms of the Bernoulli polynomials $B_n(x)$, defined, for example, {\it via} their  exponential generating function
   \begin{equation} \label{B-1}
	    \sum_{n \in \mathbb N_0} B_n(x) \frac{z^n}{n!} = \dfrac{z{\rm e}^{zx}}{{\rm e}^x-1}\,
              \qquad \big( z \in \mathbb C,\, |z|<2\pi,\, x \in \mathbb R\big)\, .
	 \end{equation}
We need some facts concerned with $B_n(x)$. Starting with the $1$--periodic Bernoulli polynomials $\mathscr B_n(x)$ defined as the 
periodic extension of $\mathscr B_n(x) = B_n(x)$, $x \in (0,1]$, we need to introduce the $1$--periodic conjugate functions 
$\mathscr B_n^\sim(x)$, $x \in \mathbb R$ ($x \not\in \mathbb Z$ if $n=1$) by
   \[ \mathscr B_n^\sim(x) := \mathscr H_1 \left[ \mathscr B_n(\cdot)\right](x), 
                              \qquad (n \in \mathbb N)\, .\]
Here $\mathscr H_1$ is the (periodic) Hilbert transform  of the $1$--periodic function $\varphi$ defined by
   \[ \mathscr H_1[\varphi](x) = {\rm PV}\, \int_{-\frac12}^{\frac12} \varphi(x-u)\, 
                                 \cot (\pi u)\, {\rm d}u\, ,\]
so  that
   \[ \mathscr B_n^\sim(x) = {\rm PV}\, \int_{-\frac12}^{\frac12}\mathscr B_n(x-u) 
                             \cot (\pi u)\, {\rm d}u\, ,\]
with $B_0^\sim(x) = \mathscr B_0^\sim(x)= 0$ for all $x \in \mathbb R$, since $B_0(x) = \mathscr B_0(x)=1$. Written as a Fourier series, 
they then are to be \cite{Butzer}
   \begin{equation} \label{X1}
      \mathscr B_{2n+1}^\sim(x) = -2(2n+1)! \sum_{k\in \mathbb N}\frac{\sin\left(2\pi kx-(2n+1)\frac\pi2\right)}{(2\pi k)^{2n+1}}, 
                                  \quad \big(n \in \mathbb N_0\big).
   \end{equation}
These conjugate periodic functions $\mathscr B_n^\sim(x)$ are used to define the  non--periodic functions $B_n^\sim(x)$, 
which can be regarded as conjugate Bernoulli  "polynomials" in a form such that their properties are similar to those of the 
classical  Bernoulli polynomials $B_n(x)$. For details see Butzer and Hauss \cite[p. 22]{ButzerHauss} and Butzer 
\cite[pp. 37-56]{Butzer}. The conjugate Bernoulli numbers needed, the $B_{2m+1}^\sim$, are the 
$B_{2m+1}^\sim(0) ( = B_{2m+1}^\sim(1))$ for which 
   \[ B_{2m+1}^\sim( \tfrac12) = \big( 2^{-2m}-1\big)\cdot B_{2m+1}^\sim(1), 
                                 \qquad B_{2m}^\sim(\tfrac12) = 0\, .\]
Some values of the conjugate Bernoulli numbers are (see \cite[p. 69]{Butzer})
   \begin{equation} \label{X2}
      B^\sim_{2m+1}\!(\tfrac12) = \begin{cases}
                - \displaystyle \frac{\log2}\pi      &  m = 0\\
                  \displaystyle \frac{\log2}{4\pi} -2 \int_{0+}^{\frac12} u^3 \cot (\pi u) {\rm d}u &  m = 1\\
                  \displaystyle \frac{11}8\int_{0+}^{\frac12} u\cot\pi{\rm d}u + \frac53\int_{0+}^{\frac12} u^3 \cot(\pi u)\,{\rm d}u\\
									\qquad \displaystyle - 2\int_{0+}^{\frac12}u^5\cot (\pi u)\, {\rm d}u &  m = 2
              \end{cases}\quad , 
   \end{equation} 
and
   \[ B^\sim_1\!(x) = -\displaystyle \frac1\pi \, \log \big( 2\sin(\pi x)\big)\, .\]
Of basic importance is also the {\it exponential generating function} of $B_k^\sim(\tfrac12)$, 
given for $|z|<2$ by
   \begin{equation} \label{B0}
      \sum_{k \in \mathbb N_0} B_k^\sim\big(\tfrac12\big)\, \frac{z^k}{k!} = 
           - \frac{z{\rm e}^{\frac z2}}{{\rm e}^z-1}\, \Omega(z) 
           = - \frac{z}{2\sinh \frac z2}\, \Omega(z)\, ,
   \end{equation}
first established by M. Hauss \cite[p. 91--95]{Hauss}, \cite{Hauss1} (see also \cite[pp. 21--29.]{ButzerHauss} and 
\cite[pp. 37--38, 78--80]{Butzer}). Above, $\Omega(z), z \in \mathbb C$ is the so--called Butzer--Flocke-Hauss (complete) Omega function 
introduced in \cite{BFH} in the form
   \[ \Omega(z) := 2 \int_{0+}^{\frac12} \sinh (z u) \cot (\pi u)\, {\rm d}u, 
                   \qquad \big( z \in \mathbb C\big)\, .\]         
It is the Hilbert transform $\mathscr H_1[{\rm e}^{-zx}](0)$ at zero of the $1$--periodic function $\big({\rm e}^{-zx}\big)_1$, 
defined by the periodic extension of the exponential function ${\rm e}^{-zx}$, $|x|< \tfrac12,\, z \in \mathbb C$, thus
   \[ \mathscr H_1\big[{\rm e}^{-zx}\big](0) = {\rm PV}\, \int_{-\frac12}^{\frac12} 
                       {\rm e}^{zu}\, \cot (\pi u)\, {\rm d}u = \Omega(z)\, .\] 
As to the Omega function, we further need its {\it basic partial fraction development} for $z \in \mathbb C \setminus 
{\rm i}\mathbb Z$, namely 
   \begin{equation} \label{B01} 
	    \Omega(2\pi z) = \frac1\pi\, \big({\rm e}^{-\pi z} - {\rm e}^{\pi z}\big) 
                        \sum_{k \in \mathbb N}  \frac{(-1)^k\,k}{z^2+k^2} 
                     = - \frac{{\rm i}\sinh (\pi z)}{\pi}\,
                        \underset{k \in \mathbb Z}{\sum\nolimits_e}\dfrac{(-1)^k \, {\rm sgn}k}{z+{\rm i}k} \, ,
   \end{equation}
the proof of which depends upon a new Hilbert--Poisson formula, introduced by Hauss; see \cite[pp. 97--103]{Hauss} or \cite{Hauss1}. 

A useful formula which will link Hilbert--Eisenstein series, Hilbert transform versions of the Bernoulli numbers and the Riemann 
Zeta function is given by (see \cite[Eq. 1.17(7)]{Erdelyi} and \cite[Eq. (54.10.3)]{Hansen})
   \begin{equation} \label{B2}
      \sum_{k \in \mathbb N}\zeta(2k+1)\, z^{2k}  
           = - \frac 12 \left[\psi(1+z)+ \psi(1-z)\right] + \boldsymbol \gamma, 
             \qquad \big( |z|<1 \big)
   \end{equation}
or, replacing $z \mapsto z{\rm i}$, then 
   \begin{equation} \label{B3}
      \sum_{k \in \mathbb N} (-1)^{k-1} \zeta(2k+1)\, z^{2k}\,  
           =  \frac 12 \left[\psi(1+{\rm i}z)+ \psi(1-{\rm i}z)\right] + 
              \boldsymbol \gamma,  \qquad \big( |z|<1 \big)\, . 
   \end{equation}
A second formula in this respect reads \cite[6.3.17]{AS}, \cite[Eq. (54.3.5)]{Hansen}
   \begin{equation} \label{B4}
      \sum_{k \in \mathbb N} (-1)^{k-1} \zeta(2k+1)\, z^{2k} 
           = \boldsymbol \gamma + \Re\big[ \psi(1+{\rm i}z)\big], 
             \qquad \big( z \in \mathbb R \big)\, . 
   \end{equation} 

\section{Hilbert--Eisenstein series}

Now, we come to the main sections of this article, dealing with Hilbert--Eisenstein series. A Hilbert (conjugate function) -- type 
version of the Eisenstein series $\varepsilon_1(z)$ was first studied by Michael Hauss in his doctoral thesis \cite{Hauss}. 

\begin{definition}
The Hilbert--Eisenstein ${\rm (HE)}$ series $\mathfrak h_r(w)$ are defined for $z \in \mathbb C \setminus {\rm i}\mathbb Z$ and 
$r \in \mathbb N_2$ by
   \[ \mathfrak h_r(z) := \sum_{k \in \mathbb Z} \dfrac{(-1)^k {\rm sgn}(k)}{(z+{\rm i}k)^r} 
                        = \sum_{k \in \mathbb N} (-1)^k \left( \dfrac1{(z + {\rm i}k)^r} 
                        - \dfrac1{(z - {\rm i}k)^r}\right)\, ,\]
and, for $r = 1$ recalling \eqref{B01}, by
   \[ \mathfrak h_1(z) := \underset{k \in \mathbb Z}{\sum\nolimits_e} 
                          \dfrac{(-1)^k {\rm sgn}(k)}{z+{\rm i}k} 
                        = \dfrac{{\rm i}\pi\, \Omega(2\pi z)}{\sinh \pi z}
                        = {\rm i}\, \Omega(2\pi z) \sum_{k \in \mathbb Z} \dfrac{(-1)^k}{z+{\rm i}k} \,,\]
with $\mathfrak h_1(0) = 2{\rm i} \log 2$, noting ${\rm sgn}(0) = 0$. 
\end{definition} 

In this respect recall that
   \[ \dfrac\pi{\sinh(\pi z)} = \sum_{k \in \mathbb Z} \frac{(-1)^k}{z+{\rm i}k} 
                   = \dfrac1z + 2z \sum_{k \in \mathbb N} \frac{(-1)^k}{z^2+k^2}\, ,
									   \qquad \big( z \in \mathbb C \setminus {\rm i}\mathbb Z\big).\]
The basic properties of $\mathfrak h_r$ for $z \in \mathbb C \setminus {\rm i}\mathbb Z$ and $r\in \mathbb N_2,s \in \mathbb N$, are
   \begin{align} \label{DP}
      \mathfrak h'_r(z) &= - r\, \mathfrak h_{r+1}(z)  \nonumber \\
      \mathfrak h_r^{(s)}(z) &= (-1)^s\, (r)_s\, \mathfrak h_{r+s}(z)\, ,
   \end{align}
as well as their difference and symmetry property \cite[Eq. (6.5.72)]{Hauss}, \cite[Eq. (9.7)]{Butzer}
   \[ \mathfrak h_r(z) + \mathfrak h_r(z+ {\rm i}) = z^{-r} - (z+ {\rm i})^{-r}; \quad
	    \mathfrak h_r(-z) = (-1)^{r+1}\mathfrak h_r(z), \qquad \big( z \in \mathbb C \setminus {\rm i}\mathbb Z\big). \]
Above 
   \[ (\rho)_\sigma := \dfrac{\Gamma(\rho+\sigma)}{\Gamma(\rho)} 
                     = \begin{cases}
                          1, & \qquad \big(\sigma = 0;\, \rho \in \mathbb C\setminus \{0\}\big) \\
                          \rho(\rho+1) \cdots (\rho+\sigma-1) & \qquad \quad \big(\sigma \in \mathbb N;\,\, \rho \in \mathbb C\big)
                       \end{cases}\, ;\]
stands for the Pochhammer symbol (or shifted, rising factorial). Note that it being understood conventionally that $(0)_0 := 1$.

\begin{proposition} 
\noindent ${\bf a)}$ For $z \in \mathbb C$, $|z|<1$ one has 
   \begin{equation} \label{X4}  
	    \underset{k \in \mathbb Z}{\sum\nolimits_e} \frac{(-1)^{k}\, {\rm sgn}(k)}{z+{\rm i}k} 
			     = 2{\rm i} \sum_{n \in \mathbb N_0}(-1)^n\, \eta(2n+1) z^{2n}\, .
	 \end{equation}
\noindent ${\bf b)}$ Moreover
   \begin{align} \label{X5}
	    B_{2n+1}^\sim(\tfrac12) &= (-1)^{n+1} (2n+1)! 2^{-2n} \pi^{-2n-1}\, \eta(2n+1) \\	 \label{X6}
	                            &= (-1)^{n} (2n+1)! \big(4^{-2n}-2^{-2n}\big) \pi^{-2n-1}\, \zeta(2n+1)\, ,
	 \end{align}
where $\eta(s) = \sum_{n \in \mathbb N} (-1)^{n-1}n^{-s},\, \Re(s)>0$ stands for the {\em Dirichlet's Eta function}.
\end{proposition}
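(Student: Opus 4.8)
The plan is to prove part \textbf{a)} by hand---pairing the indices $\pm k$ in the symmetric (Eisenstein) sum and expanding the resulting rational terms in powers of $z^2$---and then to obtain part \textbf{b)} by feeding the closed form from \textbf{a)} into the two generating-function identities \eqref{B0} and \eqref{B01} and reading off Taylor coefficients.

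For \textbf{a)}, I first merge $k$ with $-k$ in the Eisenstein summation,
\[
  \underset{k \in \mathbb Z}{\sum\nolimits_e}\frac{(-1)^k\,{\rm sgn}(k)}{z+{\rm i}k}
     = \sum_{k\in\mathbb N}(-1)^k\Big(\frac1{z+{\rm i}k}-\frac1{z-{\rm i}k}\Big)
     = 2{\rm i}\sum_{k\in\mathbb N}\frac{(-1)^{k-1}k}{z^2+k^2}\,.
\]
Since this last series is only conditionally convergent (terms $O(1/k)$), one cannot expand term by term in $z^2$ and interchange $\sum_k$ with $\sum_n$ directly. The remedy is to isolate the $n=0$ piece via $\dfrac{k}{z^2+k^2}=\dfrac1k-\dfrac{z^2}{k(z^2+k^2)}$: the first summand contributes $\sum_k(-1)^{k-1}/k=\eta(1)=\log 2$ (in agreement with $\mathfrak h_1(0)=2{\rm i}\log 2$), while the remainder $\sum_k(-1)^{k-1}\dfrac{z^2}{k(z^2+k^2)}$ has terms $O(|z|^2/k^3)$, hence converges absolutely for $|z|<1$. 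In that remainder I expand $\dfrac{z^2}{k(z^2+k^2)}=\sum_{n\ge1}(-1)^{n-1}z^{2n}k^{-2n-1}$, a geometric series licit because $|z|<1\le k$, whereupon the double series is absolutely convergent, Fubini applies, and the $k$-sum produces $\eta(2n+1)$. Recombining gives $2{\rm i}\big(\eta(1)+\sum_{n\ge1}(-1)^n\eta(2n+1)z^{2n}\big)=2{\rm i}\sum_{n\in\mathbb N_0}(-1)^n\eta(2n+1)z^{2n}$, the assertion, valid on the disc $|z|<1$.

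For \textbf{b)}, I rewrite \eqref{B01} as $\Omega(2\pi z) = -\dfrac{{\rm i}\sinh(\pi z)}{\pi}\,\underset{k \in \mathbb Z}{\sum\nolimits_e}\dfrac{(-1)^k{\rm sgn}(k)}{z+{\rm i}k}$ and substitute part \textbf{a)} to get $\Omega(2\pi z)=\dfrac{2\sinh(\pi z)}{\pi}\sum_{n\in\mathbb N_0}(-1)^n\eta(2n+1)z^{2n}$. Inserting this into Hauss's generating function \eqref{B0} evaluated at $2\pi z$, the prefactor $\dfrac{2\pi z}{2\sinh(\pi z)}$ cancels the hyperbolic sine and leaves
\[
  \sum_{k\in\mathbb N_0}B_k^\sim(\tfrac12)\,\frac{(2\pi z)^k}{k!}
     = -2\sum_{n\in\mathbb N_0}(-1)^n\eta(2n+1)\,z^{2n+1}\,,
     \qquad \big(|z|<\tfrac1\pi\big)\,.
\]
Comparing coefficients: the coefficient of $z^{2n}$ re-confirms $B_{2n}^\sim(\tfrac12)=0$, and the coefficient of $z^{2n+1}$ gives $B_{2n+1}^\sim(\tfrac12)\,(2\pi)^{2n+1}/(2n+1)! = -2(-1)^n\eta(2n+1)$, which is exactly \eqref{X5} after solving for $B_{2n+1}^\sim(\tfrac12)$. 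Finally \eqref{X6} drops out of the classical functional relation $\eta(s)=(1-2^{1-s})\zeta(s)$, i.e.\ $\eta(2n+1)=(1-2^{-2n})\zeta(2n+1)$, so that $2^{-2n}\eta(2n+1)=(2^{-2n}-4^{-2n})\zeta(2n+1)$, combined with the sign bookkeeping $(-1)^{n+1}(2^{-2n}-4^{-2n})=(-1)^n(4^{-2n}-2^{-2n})$.

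The one genuinely delicate point is the conditional convergence in \textbf{a)}: swapping the order of the two summations is not sanctioned by dominated convergence as written, and the subtraction of the $1/k$ term---equivalently, peeling off the slowly convergent $\eta(1)$ contribution before invoking Fubini---is exactly what makes the interchange rigorous. Everything else reduces to routine manipulation of geometric series and of the generating functions \eqref{B0}, \eqref{B01}.
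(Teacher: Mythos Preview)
Your proof is correct and follows essentially the same route as the paper: for \textbf{a)} you pair $k$ with $-k$, expand $\dfrac{(-1)^{k-1}k}{z^2+k^2}$ in powers of $z^2$, and sum over $k$ to produce the $\eta(2n+1)$; for \textbf{b)} you combine the generating function \eqref{B0} with the partial-fraction form \eqref{B01} and the result of \textbf{a)}, then compare Taylor coefficients, exactly as the paper does. The only minor difference is that where the paper justifies the interchange of summation by invoking the Weierstra{\ss} double-series theorem, you instead subtract the $1/k$ contribution explicitly to reduce to an absolutely convergent double sum before applying Fubini---a slightly more hands-on but equivalent justification.
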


\begin{proof} We have
   \begin{align*}
	    \lim_{N \to \infty} & \sum_{|k| \leq N} \frac{(-1)^{k}\, {\rm sgn}(k)}{z+{\rm i}k} \\
			  &= \lim_{N \to \infty} \sum_{k=1}^N (-1)^{k-1}\, \left\{ \frac1{z-{\rm i}k} - \frac1{z+{\rm i}k}\right\} \\
				&= 2{\rm i} \sum_{k\in \mathbb N} \frac{(-1)^{k-1}\, k}{z^2+k^2}
				 = 2{\rm i} \sum_{k\in \mathbb N} \frac{(-1)^{k-1}}k \sum_{n\in \mathbb N_0}(-1)^n\Big(\frac zk\Big)^{2n} \\
			  &= 2{\rm i} \lim_{N \to \infty} \sum_{k=1}^N \frac{(-1)^{k-1}}{k} + 2{\rm i} 
				   \sum_{n \in \mathbb N} (-1)^n \left\{ \sum_{k \in \mathbb N} \frac{(-1)^{k-1}}{k^{2n+1}} \right\}\, z^{2n}\\
				&= 2{\rm i} \sum_{n \in \mathbb N_0} (-1)^n \eta(2n+1)\, z^{2n}\,,
   \end{align*}
the interchange of the summation order being possible on account of the Weierstra{\ss} double series theorem (see e.g. 
\cite[p. 428]{Knopp}). This proves part {\bf a)}. \medskip

As to part {\bf b)}, on account of \eqref{B0} 
   \[ \frac1{2z}\, \sum_{n \in \mathbb N_0} B_n^\sim\big(\tfrac12\big)\, \frac{(2\pi z)^n}{n!} = 
           \frac{\pi\, \Omega(2\pi z)}{{\rm e}^{-\pi z}-{\rm e}^{\pi z}}\,.  \]
Comparing coefficients with \eqref{X4}, gives $B_{2n}^\sim\big(\tfrac12\big) = 0$ and results in \eqref{X5}. As to \eqref{X6}, 
it follows from	\eqref{X5} by noting that $\eta(2n+1) = \big(1-2^{-2n}\big)\, \zeta(2n+1)$, $n\in \mathbb N$. 

Alternatively, \eqref{X6} follows from \eqref{X1}, by setting $x=\tfrac12$, which yields 
   \[ B_{2n+1}^\sim\big(\tfrac12\big) = 2(2n+1)! \sum_{k\in \mathbb N}\frac{\sin\big((k+\tfrac12)\pi\big)}{(2\pi k)^{2n+1}} 
	                            = (-1)^n \frac{2(2n+1)!}{(2\pi)^{2n+1}}\, \eta(2n+1)\, .\]
This finishes the proof of proposition. 
\end{proof} 

Now, the generating function of $B_k^\sim(\tfrac12)$ can be expressed in terms of the digamma function. In fact, 

\begin{theorem}
For $z \in \mathbb C \setminus (1+{\rm i})\mathbb Z$ with $|z|<2\pi$ there holds
   \begin{align} \label{X3} 
      &\sum_{k \in \mathbb N_0} B_k^\sim(\tfrac12) \frac{z^k}{k!}\nonumber \\ 
			   &\quad = \begin{cases}
            - \dfrac z\pi\, \Big\{\log2 + \psi\Big(1+\dfrac{{\rm i}z}{4\pi}\Big) 
            - \psi\Big(1+\dfrac{{\rm i}z}{2\pi}\Big)\Big\} \\
            \qquad + \dfrac{{\rm i}z}{2\pi}\Big\{ \coth\dfrac{z}{4\pi} - \coth\dfrac{z}{2\pi}\Big\} -{\rm i}\,,   
						& \big(|z| <2\pi\big)\\ \\
            - \dfrac z\pi \Big\{ \log2 + \Re\, \psi\Big(1+\dfrac{{\rm i}z}{4\pi}\Big)
						- \Re\, \psi\Big(1+\dfrac{{\rm i}z}{2\pi}\Big) \Big\}\,, 
						& (-2\pi<z< 2\pi)             
         \end{cases}.
   \end{align}
\end{theorem}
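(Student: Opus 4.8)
The plan is to reduce the asserted identity to a closed form for the first Hilbert--Eisenstein series $\mathfrak h_1$ and then to evaluate that series with the help of \eqref{B3}. First I would marry Hauss's generating identity \eqref{B0} to the Definition $\mathfrak h_1(w)={\rm i}\pi\,\Omega(2\pi w)/\sinh(\pi w)$: writing $z=2\pi w$ and cancelling the common factor $\sinh(z/2)$ turns \eqref{B0} into the compact relation
\[
   \sum_{k\in\mathbb N_0} B_k^\sim(\tfrac12)\,\frac{z^k}{k!}
      = -\frac{z}{2\sinh(z/2)}\,\Omega(z)
      = \frac{{\rm i}z}{2\pi}\,\mathfrak h_1\!\Big(\tfrac{z}{2\pi}\Big).
\]
Although \eqref{B0} is quoted only for $|z|<2$, both sides here are holomorphic on the disc $|z|<2\pi$ (the nearest poles of $z\,\Omega(z)/\sinh(z/2)$ sitting at $\pm2\pi{\rm i}$), so the relation persists there by analytic continuation; alternatively one may bypass this point and start from \eqref{X5} together with $B^\sim_{2n}(\tfrac12)=0$, which already displays the left--hand side as $-\tfrac z\pi\sum_{n\ge0}(-1)^n\eta(2n+1)\big(\tfrac z{2\pi}\big)^{2n}$, a series of radius $2\pi$.

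Next I would find the closed form of $\mathfrak h_1$. By the Definition of $\mathfrak h_1$ and the first part of the Proposition, $\mathfrak h_1(w)=2{\rm i}\sum_{n\ge0}(-1)^n\eta(2n+1)w^{2n}$ for $|w|<1$. Inserting $\eta(2n+1)=(1-2^{-2n})\zeta(2n+1)$ ($n\ge1$), $\eta(1)=\log2$, splitting the sum, and applying \eqref{B3} once with argument $w$ and once with argument $w/2$, the two occurrences of $\boldsymbol\gamma$ cancel and
\[
   \mathfrak h_1(w)=2{\rm i}\Big\{\log2-\tfrac12\big[\psi(1+{\rm i}w)+\psi(1-{\rm i}w)\big]
      +\tfrac12\big[\psi\big(1+\tfrac{{\rm i}w}2\big)+\psi\big(1-\tfrac{{\rm i}w}2\big)\big]\Big\}.
\]
(One can avoid \eqref{B3} altogether: the partial fraction development \eqref{B01} gives $\mathfrak h_1(w)=-2{\rm i}\sum_{k\in\mathbb N}(-1)^kk/(w^2+k^2)$, and this alternating series is summed by writing $k/(w^2+k^2)=\tfrac12\big(\tfrac1{k+{\rm i}w}+\tfrac1{k-{\rm i}w}\big)$, applying the standard digamma evaluation of $\sum_{k\ge1}(-1)^k/(k+x)$, and collapsing the resulting half--argument terms through the Legendre duplication formula $\psi(2u)=\tfrac12\psi(u)+\tfrac12\psi(u+\tfrac12)+\log2$ and $\psi(1+u)=\psi(u)+1/u$, the polar terms $1/u$ dropping out in pairs.) Substituting $w=z/(2\pi)$ into the relation of the previous paragraph now produces the symmetric form of \eqref{X3},
\[
   \sum_{k\in\mathbb N_0}B_k^\sim(\tfrac12)\,\frac{z^k}{k!}
      =-\frac{z\log2}\pi+\frac{z}{2\pi}\Big[\psi\big(1+\tfrac{{\rm i}z}{2\pi}\big)+\psi\big(1-\tfrac{{\rm i}z}{2\pi}\big)
         -\psi\big(1+\tfrac{{\rm i}z}{4\pi}\big)-\psi\big(1-\tfrac{{\rm i}z}{4\pi}\big)\Big].
\]

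The two displayed cases of \eqref{X3} follow by elementary rearrangement of this symmetric form. For the first case I would apply the reflection relation $\psi(1-u)=\psi(1+u)-1/u+\pi\cot(\pi u)$ at $u=\tfrac{{\rm i}z}{2\pi}$ and $u=\tfrac{{\rm i}z}{4\pi}$ and convert the cotangents by $\cot({\rm i}\theta)=-{\rm i}\coth\theta$; this turns each $\psi(1-\cdot)$ into the corresponding $\psi(1+\cdot)$ plus a hyperbolic--cotangent correction, the polar pieces $1/u$ coalescing into the single constant $-{\rm i}$, and collecting terms yields the announced right--hand side. For the second case, $z$ real with $|z|<2\pi$ forces $\psi(1-\tfrac{{\rm i}z}{2\pi})=\overline{\psi(1+\tfrac{{\rm i}z}{2\pi})}$ and the analogous identity for the other pair, so each bracket in the symmetric form collapses to $2\,\Re\,\psi(1+\cdot)$ while the $\coth$--terms and the constant vanish.

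The single genuinely delicate step is the closed--form summation of the second paragraph — in either variant, keeping the half--argument digamma identities and the pairwise cancellation of the polar $1/u$ contributions under control — together with the routine analytic bookkeeping: extending \eqref{B0} (or \eqref{B3}) to the full disc $|z|<2\pi$, and checking that every apparent singularity of the right--hand side of the first case inside that disc (the $\coth$ at the origin and the poles of the digamma terms) is removable, so that \eqref{X3} is an identity between holomorphic functions on $|z|<2\pi$.
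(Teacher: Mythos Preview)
Your argument is correct and follows essentially the same path as the paper: both expand the generating function as a power series in $\zeta(2k+1)$ (the paper directly via \eqref{X6}, you via $\mathfrak h_1$ and the $\eta$--$\zeta$ relation, which amounts to the same thing) and then invoke \eqref{B3} twice, at arguments $z/(2\pi)$ and $z/(4\pi)$, to reach the symmetric digamma form; the real case follows from \eqref{B4} (equivalently, conjugate symmetry of $\psi$). You are more explicit than the paper about the reflection step converting that symmetric form into the $\coth$--version of the first case of \eqref{X3}, and your parenthetical alternative bypassing \eqref{B3} anticipates what the paper does later in Theorem~4.1.
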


\begin{proof} 
Substituting formula \eqref{X6} for $B^\sim_{2m+1}\!(\tfrac12)$ into the series below, and observing \eqref{B2}, we have
   \begin{align*}
      \sum_{k \in \mathbb N_0} B_k^\sim(\tfrac12) \, \frac{z^k}{k!} 
           &= - \frac{\log 2}\pi\cdot z
              + \sum_{k \in \mathbb N} B_k^\sim(\tfrac12) \, \frac{z^k}{k!}\\
           &= - \frac{\log 2}\pi\cdot z + 4 \sum_{k \in \mathbb N} (-1)^k 
              \Big( \frac z{4\pi}\Big)^{2k+1}\, \zeta(2k+1)\\ 
      &\qquad  - 2\sum_{k \in \mathbb N}(-1)^k \Big(\frac z{2\pi}\Big)^{2k+1}\,\zeta(2k+1) \\
           &= - \frac{\log 2}\pi\cdot z + 4 \Big\{ \frac z{8\pi} \Big[ - 2\boldsymbol \gamma 
              - \psi\Big( 1- \frac{{\rm i}z}{4\pi}\Big) 
              - \psi\Big( 1+ \frac{{\rm i}z}{4\pi}\Big)\Big]\Big\} \\
      &\qquad - 2\Big\{\frac z{4\pi}\Big[ - 2\boldsymbol \gamma 
              - \psi\Big( 1- \frac{{\rm i}z}{2\pi}\Big) 
              - \psi\Big( 1+ \frac{{\rm i}z}{2\pi}\Big)\Big]\Big\}\,.
   \end{align*}
This establishes the representation in \eqref{X3} for complex $|z|< 2\pi$. That for real $z \in (-2\pi, 2\pi)$ follows from \eqref{B4}. 
\end{proof}

Observe that the proof of Theorem 3.3 has the same outward appearance as that of Theorem 7.3. in \cite[p. 74]{Butzer}, but it uses the 
correct formula \eqref{X2}, provided with two proofs in Proposition 3.2. 

Now, the $\mathfrak h_1(z)$ can also be represented in terms of the classical digamma function.

\begin{theorem}      
There holds
   \[ \mathfrak h_1(z) = \left\{ 
             \begin{array}{lc} 
                  2{\rm i} \log 2 + {\rm i} \Big\{ \psi\Big(1 + {\rm i}\dfrac z2 \big)
                + \psi\Big(1 - {\rm i}\dfrac z2 \Big) \\ 
                  \qquad \qquad \quad - \psi\big(1 + {\rm i}z \big) 
                - \psi\big(1-{\rm i}z\big)\Big\} &  \big( |z|<2\pi \big) \\   \\
                  2{\rm i} \log 2 + 2{\rm i} \Re \Big\{ \psi\Big(1+{\rm i} \dfrac z2\Big) 
                - \psi\Big(1+{\rm i}z \Big)\Big\} & \quad (-2\pi<z<2\pi) 
             \end{array} \right. \, .\]
\end{theorem}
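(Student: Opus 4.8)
The plan is to deduce the formula directly from the power-series representation of $\mathfrak h_1$ supplied by Proposition 3.2 and then to convert the resulting Zeta values into digamma functions by means of \eqref{B3}--\eqref{B4}. By the definition of $\mathfrak h_1$ together with part \textbf{a)} of Proposition 3.2, one has, for $|z|<1$,
\[ \mathfrak h_1(z) = \underset{k\in\mathbb Z}{\sum\nolimits_e}\frac{(-1)^k\,{\rm sgn}(k)}{z+{\rm i}k} = 2{\rm i}\sum_{n\in\mathbb N_0}(-1)^n\,\eta(2n+1)\,z^{2n}. \]
First I would split off the term $n=0$, which contributes $2{\rm i}\,\eta(1)=2{\rm i}\log 2$, and rewrite the remaining tail by means of $\eta(2n+1)=(1-2^{-2n})\zeta(2n+1)$, $n\in\mathbb N$, thereby exhibiting it as a difference of two Zeta series, one in the variable $z$ and one in $z/2$,
\[ 2{\rm i}\sum_{n\in\mathbb N}(-1)^n\zeta(2n+1)z^{2n}\;-\;2{\rm i}\sum_{n\in\mathbb N}(-1)^n\zeta(2n+1)\Bigl(\tfrac z2\Bigr)^{2n}. \]

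The next step is to apply \eqref{B3} to each of these two series, which is legitimate since $|z|<1$: as \eqref{B3} asserts that $\sum_{k\in\mathbb N}(-1)^{k-1}\zeta(2k+1)w^{2k}=\tfrac12\bigl[\psi(1+{\rm i}w)+\psi(1-{\rm i}w)\bigr]+\boldsymbol\gamma$, the first sum equals $-{\rm i}\bigl[\psi(1+{\rm i}z)+\psi(1-{\rm i}z)\bigr]-2{\rm i}\boldsymbol\gamma$ and the second equals $-{\rm i}\bigl[\psi(1+{\rm i}z/2)+\psi(1-{\rm i}z/2)\bigr]-2{\rm i}\boldsymbol\gamma$. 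The two $\boldsymbol\gamma$-terms cancel in the difference, and restoring the $n=0$ contribution gives
\[ \mathfrak h_1(z) = 2{\rm i}\log 2 + {\rm i}\Bigl\{\psi\bigl(1+{\rm i}\tfrac z2\bigr)+\psi\bigl(1-{\rm i}\tfrac z2\bigr)-\psi(1+{\rm i}z)-\psi(1-{\rm i}z)\Bigr\},\qquad(|z|<1). \]
To promote this to the range $|z|<2\pi$ I would appeal to the identity theorem: both sides are holomorphic on the connected domain $\mathbb C\setminus{\rm i}\mathbb Z$ --- the digamma factors $\psi(1\pm{\rm i}z/2)$, $\psi(1\pm{\rm i}z)$ have their poles only at points of ${\rm i}(\mathbb Z\setminus\{0\})$, and at $z=0$ the curly bracket vanishes so the right-hand side there equals $2{\rm i}\log 2=\mathfrak h_1(0)$ --- so agreement on the disc $|z|<1$ forces agreement throughout, in particular for $|z|<2\pi$. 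The real-variable version is then immediate: for $z\in(-2\pi,2\pi)$ one has $\psi(1-{\rm i}z/2)=\overline{\psi(1+{\rm i}z/2)}$ and $\psi(1-{\rm i}z)=\overline{\psi(1+{\rm i}z)}$, so the bracket collapses to $2\Re\,\psi(1+{\rm i}z/2)-2\Re\,\psi(1+{\rm i}z)$; alternatively one may work with \eqref{B4} in place of \eqref{B3} from the start.

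An entirely independent derivation is available through Theorem 3.3: combining the definition $\mathfrak h_1(z)={\rm i}\pi\,\Omega(2\pi z)/\sinh(\pi z)$ with \eqref{B0} taken at argument $2\pi z$ gives $\sum_{k\in\mathbb N_0}B_k^\sim(\tfrac12)\,(2\pi z)^k/k!=-\tfrac{\pi z}{\sinh(\pi z)}\,\Omega(2\pi z)={\rm i}z\,\mathfrak h_1(z)$, whence $\mathfrak h_1(z)=-\tfrac{{\rm i}}{z}\sum_{k\in\mathbb N_0}B_k^\sim(\tfrac12)\,(2\pi z)^k/k!$; inserting the closed form \eqref{X3} evaluated at $2\pi z$ and clearing the emerging hyperbolic-cotangent and $z^{-1}$ terms by means of $\psi(1+w)=\psi(w)+w^{-1}$ and the reflection formula \eqref{A1}, i.e. $\psi(1-w)-\psi(w)=\pi\cot(\pi w)$, reproduces the same identity. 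In either route the computation is essentially routine; the only points calling for care are the separate treatment of the term $n=0$ (since \eqref{B3} contributes no constant term whereas $\eta(1)=\log 2\neq 0$), the justification of the analytic continuation, and --- in the second route --- checking that the spurious $\coth$- and $1/z$-contributions do cancel after the substitutions.
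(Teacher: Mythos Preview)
Your proposal is correct and mirrors the paper's own two proofs: your first route is the paper's ``First proof'' (Proposition~3.2\,\textbf{a)} combined with \eqref{B3}), while your second route via \eqref{B0} and Theorem~3.3 is the paper's ``Second proof''. Two minor points where you actually improve on the paper: you handle the $n=0$ term more cleanly by invoking $\eta(1)=\log 2$ directly (the paper instead uses a limit $\lim_{h\to 0_+}\big[(\mathrm{i}z)^{2h}-(\mathrm{i}z/2)^{2h}\big]\zeta(2h+1)=\log 2$), and you supply the analytic-continuation step from $|z|<1$ to the stated range, which the paper leaves implicit.
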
 

\noindent {\bf First proof.} 
According to \eqref{X4} of Proposition 3.2 and Definition 3.1, we have, noting $\eta(s) = (1-2^{1-s})\zeta(s)$, $\Re(s)>1$,
   \begin{align*}
       \mathfrak h_1(z) &= 2{\rm i}\, \sum_{ k \in \mathbb N_0}(-1)^k\, \eta(2k+1)\, z^{2k}\\
			        &= 2{\rm i}\,\sum_{k \in \mathbb N_0} (-1)^k\,
                 \big(1-2^{-2k}\big)\, \zeta(2k+1)\, z^{2k}\\
              &= 2{\rm i}\,\sum_{k \in \mathbb N_0} \left( ({\rm i}z)^{2k} 
               - \big(\tfrac{{\rm i}z}2\big)^{2k}\right) \, \zeta(2k+1) \\
              &= 2{\rm i}\,\Big\{ \lim_{h \to 0_+}\left( ({\rm i}z)^{2h} 
                         - \big(\tfrac{{\rm i}z}2\big)^{2h}\right) \, \zeta(2h+1)\\ 
              &\qquad \quad + \sum_{k \in \mathbb N} \left( ({\rm i}z)^{2k} 
               - \big(\tfrac{{\rm i}z}2\big)^{2k}\right)\, \zeta(2k+1)\Big\} \, .
   \end{align*}
We now express the sum {\it via} the linear combination of digamma functions, recalling \eqref{B3}, that means 
   \begin{align*}
       \mathfrak h_1(z) &= 2{\rm i} \,\Bigg\{ \lim_{h \to 0_+}\left( ({\rm i}z)^{2h} 
                - \left(\frac{{\rm i}z}2\right)^{2h}\right)\, \zeta(2h+1) \\
               & \qquad + \frac 12 \Big[\psi\left(1+ {\rm i}\frac z2\right) 
                + \psi\left(1- {\rm i}\frac z2\right)\Big] 
                - \frac 12 \big[\psi\left(1+ {\rm i}z\right) 
                + \psi\left(1- {\rm i}z\right)\big] \Bigg\} \, .
   \end{align*}
On the other hand 
   \begin{align*}
      \lim_{h \to 0_+}\Bigg(({\rm i}z)^{2h} 
            - \left(\frac{{\rm i}z}2\right)^{2h}\Bigg) \zeta(2h+1) 
           &= \lim_{h \to 0_+}\Bigg( ({\rm i}z)^{2h} - \left(\frac{{\rm i}z}2\right)^{2h}\Bigg)
              \left( \frac1{2h} + \boldsymbol \gamma + o(h) \right) \\
           &= \log ({\rm i}z) - \log \big( \tfrac{{\rm i}z}2\big) = \log 2 \, .
    \end{align*}
Now, obvious steps lead to the assertion of Theorem 3.4. \hfill $\Box$ \medskip 

Observe that the real parts of Theorem 3.4 can also be expressed as integrals, noting 
   \[ \Re\, \psi\Big(1+\frac{{\rm i}z}{2\pi}\Big) = - \boldsymbol \gamma + 
	        2\int_0^\infty {\rm e}^{-u}\frac{\sin^2\left( \frac{zu}{2\pi}\right)}{\sinh(u)}\,{\rm d}u\, .\]
Although Theorem 3.4 is to be found in \cite[Eq. (7.8)]{Butzer}, the above proof is a new approach to Hilbert--Eisenstein series. 
\medskip

\noindent {\bf Second proof.} According to \eqref{B0}, we have on the one hand
   \begin{equation} \label{C5}
      \sum_{k \in \mathbb N_0} B_k^\sim\big(\tfrac12\big)\, \frac{(2\pi z)^k}{k!} 
              = - \frac{\pi z}{\sinh (\pi z)}\, \Omega(2\pi z), \qquad \big( z \in \mathbb C \setminus {\rm i}\mathbb Z\big)\,,
   \end{equation}
and, on the other hand
   \begin{equation} \label{C6}
      \mathfrak h_1(z) = \frac{{\rm i}\pi}{\sinh (\pi z)}\, \Omega(2\pi z)\, .
   \end{equation}
Thus, following the argument along the lines of the proof of Theorem 3.3, 
   \begin{align*}
      -\frac z{\rm i}\, \mathfrak h_1(z) 
           &= \sum_{k \in \mathbb N_0} B_k^\sim\big(\tfrac12\big)\, \frac{(2\pi z)^k}{k!} \\
           &= -2z\, \log 2 + 2 \sum_{k \in \mathbb N} (-1)^k 
              \Big( \frac{2\pi z}{4\pi}\Big)^{2k+1} \zeta(2k+1)\\ 
    &\qquad - \sum_{k \in \mathbb N} (-1)^k \Big( \frac{2\pi z}{2\pi}\Big)^{2k+1} \zeta(2k+1)\\
           &= -2z\, \log 2 + 2 \cdot \frac z4\, \Big\{ 2 \psi(1) 
            - \psi\Big(1 + {\rm i}\frac z2 \Big)- \psi\Big(1 - {\rm i}\frac z2 \Big)\Big\}\\
    &\qquad - \frac z2\, \big\{ 2 \psi(1) - \psi\big(1 + {\rm i}z\big) 
            - \psi\big(1 - {\rm i}z \big)\big\}\, .
   \end{align*}  
Therefore 
   \[ \mathfrak h_1(z) = 2{\rm i}\, \log 2 + {\rm i}\, \big\{ \psi\big(1 + {\rm i}\tfrac z2 \big)
                             + \psi\big(1 - {\rm i}\tfrac z2 \big) 
                             - \psi\big(1 + {\rm i}z \big) - \psi\big(1-{\rm i}z\big)\big\}\, .\]
This completes the proof of the first formula of Theorem 3.4. The second one follows immediately by the 
mirror symmetry formula $\psi(\overline w) = \overline{\psi(w)},\, w \in \mathbb C$.  

It is important to mention that this representation of $\mathfrak h_1(z)$ is not given in \cite{Butzer}, but contained 
implicitly in a more complicated form in the proof of Proposition 6.4.1. in \cite{Hauss1}.  \hfill $\Box$
  
\begin{corollary}
The Omega function $\Omega(z)$ has the representation 
   \[ \Omega(z) = \frac1\pi \sinh \Big( \frac{z}{2}\Big) \Big\{ 2\log 2 
                + \psi\Big( 1+\frac{{\rm i}z}{4\pi}\Big) + \psi\Big( 1- \frac{{\rm i}z}{4\pi}\Big)
                - \psi\Big( 1+\frac{{\rm i}z}{2\pi}\Big) - \psi\Big( 1- \frac{{\rm i}z}{2\pi}\Big) \Big\} ,\]
for $z \in \mathbb C \setminus \mathbb Z_0^-, |z|< 2\pi$. 
\end{corollary}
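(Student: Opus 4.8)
The plan is to invert the link between the Hilbert--Eisenstein series $\mathfrak h_1$ and the Omega function and then to feed in the digamma representation already secured in Theorem 3.4. The bridge is equation \eqref{C6} (equivalently the last equality of Definition 3.1 combined with \eqref{B01}), namely $\mathfrak h_1(z) = \dfrac{{\rm i}\pi}{\sinh(\pi z)}\,\Omega(2\pi z)$ for $z \in \mathbb C \setminus {\rm i}\mathbb Z$, which I would solve for $\Omega$:
\[ \Omega(2\pi z) = \frac{\sinh(\pi z)}{{\rm i}\pi}\,\mathfrak h_1(z), \qquad \big(z \in \mathbb C \setminus {\rm i}\mathbb Z\big). \]

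Into the right-hand side I would insert the first (complex) formula of Theorem 3.4, valid for $|z| < 2\pi$; since it reads $\mathfrak h_1(z) = {\rm i}\{2\log2 + \psi(1+\tfrac{{\rm i}z}{2}) + \psi(1-\tfrac{{\rm i}z}{2}) - \psi(1+{\rm i}z) - \psi(1-{\rm i}z)\}$, the factor ${\rm i}$ cancels $1/({\rm i}\pi)$ and leaves
\[ \Omega(2\pi z) = \frac{\sinh(\pi z)}{\pi}\Big\{ 2\log 2 + \psi\big(1+\tfrac{{\rm i}z}{2}\big) + \psi\big(1-\tfrac{{\rm i}z}{2}\big) - \psi\big(1+{\rm i}z\big) - \psi\big(1-{\rm i}z\big)\Big\}. \]
The substitution $z \mapsto z/(2\pi)$ then turns $\sinh(\pi z)$ into $\sinh(z/2)$ and the digamma arguments $\tfrac{{\rm i}z}{2}, {\rm i}z$ into $\tfrac{{\rm i}z}{4\pi}, \tfrac{{\rm i}z}{2\pi}$, which is exactly the assertion. (Alternatively one may bypass $\mathfrak h_1$ altogether: combine the generating-function identity \eqref{C5} with the digamma expansion of $\sum_k B_k^\sim(\tfrac12) w^k/k!$ underlying Theorem 3.3, and cancel the common prefactor $-\tfrac{w/2}{\sinh(w/2)}$ against $-\tfrac{w}{2\pi}$ after writing $w = 2\pi z$; the outcome is the same.)

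The only point that needs care is bookkeeping of the domain, and it presents no real obstacle. Because Theorem 3.4 is quoted for $|z| < 2\pi$, the substitution $z \mapsto z/(2\pi)$ delivers the representation a priori on the larger disc $|z| < 4\pi^2$ with $2\pi{\rm i}\mathbb Z$ removed, so in particular on the disc $|z| < 2\pi$ claimed here. On this open disc the removed set $2\pi{\rm i}\mathbb Z$ reduces to the single point $z = 0$, while the poles of the four digamma factors sit on $4\pi{\rm i}\mathbb N$, $-4\pi{\rm i}\mathbb N$, $2\pi{\rm i}\mathbb N$, $-2\pi{\rm i}\mathbb N$ and hence lie on or outside $|z| = 2\pi$; thus the right-hand side is holomorphic on $|z| < 2\pi$, and at $z = 0$ it takes the value $0 = \Omega(0)$ since $\sinh(z/2)$ vanishes there while the brace tends to $2\log 2$. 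As $\Omega$ is entire, the two holomorphic functions, agreeing off the point $z=0$, agree everywhere on $|z| < 2\pi$, which completes the argument.
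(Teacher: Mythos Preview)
Your proof is correct and follows essentially the same approach as the paper: the paper also derives the corollary by combining the relation $\Omega(z) = -\tfrac{{\rm i}}{\pi}\sinh(\tfrac{z}{2})\,\mathfrak h_1(\tfrac{z}{2\pi})$ with the digamma representation of $\mathfrak h_1$, noting that either Theorem~3.3 or Theorem~3.4 can supply the latter. Your domain bookkeeping (handling the removable point $z=0$) is more explicit than the paper's sketch, but the substance is the same.
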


The proof is immediate from Theorem 3.3 in view of 
   \[ \Omega(z) = - \frac{\rm i}\pi \sinh \Big( \frac{z}2\Big) \cdot \mathfrak h_1 \Big( \frac{z}{2\pi}\Big)\, ,\] 
so it is omitted. However, we remark that this corollary could also be derived, just as simply, {\it via} Theorem 3.4. 

Although, as observed in \cite[p. 67]{Butzer}, the Omega function is not an "elementary function", it is nevertheless expressible 
in terms of the hyperbolic sine function multiplied by a (simple) linear combination of digamma functions.

\section{A novel alternative approach to Theorem 3.4}

The representation of $\mathfrak h_1(z)$ in terms of certain combinations of $\psi$--functions with the constant $2{\rm i}\, \log 2$ 
(Theorem 3.4), was established {\em via} the generating function of the conjugate Bernoulli numbers $B_k^\sim\big( \tfrac12\big)$, 
equation \eqref{B0}, plus arguments used in the proof of Theorem 3.3, which in turn were based fundamentally upon the representation 
of $\zeta(2k+1)$ in terms of $B_{2k+1}^\sim(0)$, thus 
   \[ \zeta(2m+1) = (-1)^{m} 2^{2m}\pi^{2m-1}\, \frac{B_{2m+1}^\sim (0)}{(2m+1)!}, 
                    \qquad \big( m \in \mathbb N\big)\,,\]
as well as the delicate formulae \eqref{B3} and \eqref{B4} given in tables by Hansen \cite{Hansen} and Abramowitz--Stegun \cite{AS}. 
But the last four formulae are only to be found in tables. Thus one does not know of the possible difficulties of their proofs. 

A further aim of this article is to present alternative proofs which are fully independent of these two formulae \eqref{B3} and 
\eqref{B4}. Moreover, since two proofs of Theorem 3.4 presented are based on power series expansions in the open unit disc $|z|<1$, 
we can extend the validity range to the whole $\big( \mathbb C \setminus {\rm i}\mathbb Z\big) \cup \{0\} $ by our present approach. 

There exists a vast literature concerning Mathieu series $S_r(x)$ and the more recent alternating Mathieu series $\widetilde S_r(x)$, 
both of which are defined by 
   \begin{align} \label{D1}
      S_r(x) &= \sum_{k \in \mathbb N} \frac{2k}{(k^2+x^2)^r}, 
                \qquad \big( r>1 \big)\\ \label{D2}
      \widetilde S_r(x) &= \sum_{k \in \mathbb N} \frac{(-1)^{k-1}2k}{(k^2+x^2)^r}, 
                \qquad \big( r>0 \big)\, ;
   \end{align}
see among others \cite{JunSri}, \cite{PST}, \cite{PTL} and the references therein. These articles are of interest, in particular since 
the series $\widetilde S_r$ is connected to HE series $\mathfrak h_1$. We now come to a {\it new third proof} of Theorem 3.4, 
at the same time establishing the representation not only for $\mathfrak h_1$ but also for higher order $\mathfrak h_r$ on 
the extended range. 

\begin{theorem}
For all $z \in \big(\mathbb C \setminus {\rm i}\mathbb Z\big) \cup \{0\}$ there holds
   \begin{equation} \label{D3}
      \mathfrak h_1(z) = 2{\rm i} \log 2 + {\rm i} \Big\{ \psi\Big(1+{\rm i} \dfrac z2\Big) 
                       + \psi\Big(1-{\rm i} \dfrac z2\Big) - \psi\Big(1+{\rm i}z \Big) - \psi\Big(1-{\rm i}z \Big) \Big\}\, . 
   \end{equation}
Moreover, for the same $z$--domain we have for $r \in \mathbb N_2$
   \begin{align} \label{D5}
      \mathfrak h_r(z) &= \dfrac{{\rm i}^r}{\Gamma(r)}\, \Big\{ 2^{-r+1}\,
                    \psi_{r-1}\left(1 - {\rm i}\frac z2 \right) 
                  + (-2)^{-r+1}\,\psi_{r-1}\left(1+{\rm i}\frac z2 \right)\nonumber\\ 
          &\qquad - \psi_{r-1}\left(1 - {\rm i}z \right) 
                  - (-1)^{r-1}\psi_{r-1}\left(1 + {\rm i}z\right)\Big\}\, .
    \end{align} 
\end{theorem}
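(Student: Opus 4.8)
The plan is to bypass the tabulated formulae \eqref{B3}--\eqref{B4} altogether by routing everything through the alternating Mathieu series \eqref{D2} of order one. First I would show that $\mathfrak h_1(z)={\rm i}\,\widetilde S_1(z)$ on all of $\big(\mathbb C\setminus{\rm i}\mathbb Z\big)\cup\{0\}$, then evaluate $\widetilde S_1$ in closed form using only the digamma series \eqref{D7} and the classical Legendre duplication identity for $\psi$, and finally obtain every higher $\mathfrak h_r$ from \eqref{D3} by differentiation via \eqref{DP}. Since no step rests on a power series, the validity range is automatically the whole set $\big(\mathbb C\setminus{\rm i}\mathbb Z\big)\cup\{0\}$ and not merely the disc $|z|<1$.

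For the first step I would observe that the pairing of the indices $\pm k$ carried out in the proof of Proposition 3.2 never used $|z|<1$: for every $z\in\big(\mathbb C\setminus{\rm i}\mathbb Z\big)\cup\{0\}$ the symmetric series defining $\mathfrak h_1$ converges and equals $2{\rm i}\sum_{k\in\mathbb N}(-1)^{k-1}k\,(z^2+k^2)^{-1}={\rm i}\,\widetilde S_1(z)$. Splitting $2k\,(k^2+z^2)^{-1}=(k-{\rm i}z)^{-1}+(k+{\rm i}z)^{-1}$ and re-indexing turns this into $\mathfrak h_1(z)={\rm i}\big(\beta(1-{\rm i}z)+\beta(1+{\rm i}z)\big)$, where $\beta(w):=\sum_{k\in\mathbb N_0}(-1)^k(k+w)^{-1}$ is the (conditionally convergent) Dirichlet-type series, legitimate exactly when $w\notin-\mathbb N_0$, i.e. for the $z$ in question. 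Subtracting \eqref{D7} written for the arguments $\tfrac w2$ and $\tfrac{w+1}{2}$ and combining consecutive terms gives the elementary identity $\beta(w)=\tfrac12\big[\psi\big(\tfrac{w+1}{2}\big)-\psi\big(\tfrac w2\big)\big]$, whence $\mathfrak h_1(z)=\tfrac{{\rm i}}{2}\big[\psi\big(1+\tfrac{{\rm i}z}{2}\big)+\psi\big(1-\tfrac{{\rm i}z}{2}\big)-\psi\big(\tfrac{1+{\rm i}z}{2}\big)-\psi\big(\tfrac{1-{\rm i}z}{2}\big)\big]$.

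To eliminate the half-integer-shifted arguments I would apply the Legendre duplication formula in its logarithmic-derivative form $\psi(2w)=\log2+\tfrac12\psi(w)+\tfrac12\psi\big(w+\tfrac12\big)$ with $w=\tfrac{1\pm{\rm i}z}{2}$, which gives $\psi\big(\tfrac{1\pm{\rm i}z}{2}\big)=2\psi(1\pm{\rm i}z)-\psi\big(1\pm\tfrac{{\rm i}z}{2}\big)-2\log2$. Substituting and collecting terms yields exactly \eqref{D3}, the constant $2{\rm i}\log2$ being the sum of the two $\log2$'s produced by the two applications of the duplication formula. For \eqref{D5} I would use \eqref{DP} --- valid, by termwise differentiation of Definition 3.1, also for $r=1$ --- in the form $\mathfrak h_r(z)=\tfrac{(-1)^{r-1}}{\Gamma(r)}\,\mathfrak h_1^{(r-1)}(z)$, differentiate \eqref{D3} termwise $r-1$ times so that the constant drops out and each digamma becomes the polygamma $\psi_{r-1}$ of \eqref{A11} times a chain factor $\big(\pm\tfrac{{\rm i}}{2}\big)^{r-1}$ or $(\pm{\rm i})^{r-1}$, and then simplify the powers of ${\rm i}$ and of $2$; this reproduces \eqref{D5}.

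The one point I expect to require genuine care is the handling of the conditionally convergent series on the extended domain: the chain $\mathfrak h_1\mapsto\widetilde S_1\mapsto\beta(1-{\rm i}z)+\beta(1+{\rm i}z)$ must preserve the symmetric pairing of the indices $\pm k$ throughout (the Weierstra{\ss} double-series theorem invoked in Proposition 3.2 is no longer available once $|z|\ge1$), and the termwise differentiation in the last step needs to be justified by locally uniform convergence of the resulting polygamma series off ${\rm i}\mathbb Z$. Everything else --- in particular tracking the signs and the factors of ${\rm i}$ and $2$ in \eqref{D5} --- is routine but must be carried out attentively.
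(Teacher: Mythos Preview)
Your proposal is correct and follows a genuinely different route from the paper's own argument. The paper anchors everything in the \emph{absolutely} convergent series $\mathfrak h_2$: it applies the even/odd splitting $\sum(-1)^{k-1}a_k=\sum a_k-2\sum a_{2k}$ directly to the defining series of $\mathfrak h_2$, obtains the trigamma representation \eqref{C41}, and then \emph{integrates} over $[0,z]$ to recover $\mathfrak h_1$, with the constant $2{\rm i}\log 2$ appearing as the initial value $\mathfrak h_1(0)$. For $r\ge 2$ it differentiates $\mathfrak h_2$ via \eqref{C411} rather than $\mathfrak h_1$. You instead attack $\mathfrak h_1$ head--on through the identity $\beta(w)=\tfrac12\big[\psi\big(\tfrac{w+1}{2}\big)-\psi\big(\tfrac w2\big)\big]$ and then invoke the Legendre duplication formula to convert the half--shifted arguments $\psi\big(\tfrac{1\pm{\rm i}z}{2}\big)$ into $\psi(1\pm{\rm i}z)$; here $2{\rm i}\log 2$ emerges from the two $\log 2$ contributions of the duplication identity.

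What each approach buys: the paper's detour through $\mathfrak h_2$ sidesteps every conditional--convergence issue, since all series manipulated are normally convergent and termwise integration is automatic; the relation $\mathfrak h_1'=-\mathfrak h_2$ is then a \emph{consequence} of the construction rather than an input. Your route is more direct for $\mathfrak h_1$ and makes the alternating--Mathieu--series link $\mathfrak h_1={\rm i}\,\widetilde S_1$ explicit, but it requires the extra duplication step and the care with the conditionally convergent $\beta$--series that you correctly flag (grouping consecutive terms $\frac{1}{2m+w}-\frac{1}{2m+1+w}$ renders $\beta$ absolutely convergent and disposes of that issue cleanly). Both arguments avoid \eqref{B3}--\eqref{B4} entirely and both are valid on the full domain $\big(\mathbb C\setminus{\rm i}\mathbb Z\big)\cup\{0\}$.
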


\begin{proof} 
To perform the proof of the representation formula \eqref{D3} we have to connect the Hilbert--Eisenstein series $\mathfrak h_1(z)$, 
which is to be understood in the sense of Eisenstein  summation, with the normally convergent HE series $\mathfrak h_r(z), r \ge 2$ 
(for the latter see Remmert \cite[p. 290]{Remmert}), which is termwise integrable (see \cite[p. 42]{Remmert1}).  

From Definition 3.1 and the series form  \eqref{D7} of the digamma function, using the straightforward representation of the alternating 
series, say
   \[ \sum_{k \in \mathbb Z} (-1)^{k-1} a_k = \sum_{k\,\,{\rm odd}}a_k - \sum_{k\,\,{\rm even}}a_k = \sum_{k \in \mathbb Z}a_k 
                                              - 2\sum_{k \in \mathbb Z}a_{2k}\,,\]
we have for all $z \in \big(\mathbb C \setminus {\rm i}\mathbb Z\big) \cup \{0\}$,
   \begin{align} \label{C41}
      \mathfrak h_2(z) &= - \sum_{k \in \mathbb N_0} \left( \dfrac1{(z + {\rm i}k)^2} 
                   - \dfrac1{(z - {\rm i}k)^2}\right)  
                   + 2\, \sum_{k \in \mathbb N_0} \left( \dfrac1{(z + 2{\rm i}k)^2} 
                   - \dfrac1{(z - 2{\rm i}k)^2}\right) \nonumber \\
                  &= -\sum_{k \in \mathbb N_0} \left( \dfrac1{(k+{\rm i}z)^2} 
                   - \dfrac1{(k - {\rm i}z)^2}\right)
                   + \frac12\sum_{k\in \mathbb N_0} \left( \dfrac1{(k+{\rm i}\frac z2)^2} 
                   - \dfrac1{(k - {\rm i} \frac z2)^2}\right)\nonumber \\
                  &= \frac12 \left[ \psi_1\left(1 + {\rm i} \frac z2\right) 
                   - \psi_1\left(1 - {\rm i} \frac z2\right)\right] 
                   - \psi_1\left(1 + {\rm i}z\right) + \psi_1\left(1 - {\rm i} z\right)\, ; 
   \end{align} 
actually, we employ here the {\em trigamma function} $\psi_1(z) = \sum_{k \in \mathbb N_0}(k+z)^{-2}$, which normally converges 
in $\big(\mathbb C \setminus {\rm i}\mathbb Z\big) \cup \{0\}$. 

Term--wise integration then implies 
   \[ \int_0^z \mathfrak h_2(t)\, {\rm d}t = \mathfrak h_1(0) - \mathfrak h_1(z) 
             =  2{\rm i}\, \log 2 - \mathfrak h_1(z) \, .\]
Integrating \eqref{C41} directly on $[0,z]$ too, we obtain
   \[ \int_0^z \mathfrak h_2(t)\, {\rm d}t = -{\rm i} \left[ \psi\left(1 
               + {\rm i} \frac z2\right) 
               + \psi\left(1 - {\rm i} \frac z2\right) - \psi\left(1 + {\rm i}z\right) 
               - \psi\left(1 - {\rm i} z\right) \right]\, ,\] 
which completes the proof of the desired closed form representation \eqref{D3}. 

In view of the differentiation property \eqref{DP} of the HE series (see also \cite[p. 796, Eq.  (41)]{BBP}), 
   \begin{equation} \label{C411}
	    \mathfrak h_r(z) = \dfrac{(-1)^r}{\Gamma(r)}\, \mathfrak h_2^{(r-2)}(z),
	 \end{equation}
where $z \in \big(\mathbb C \setminus {\rm i}\mathbb Z\big) \cup \{0\}$ and $r\in \mathbb N_2$, applied to \eqref{C41}, we obtain
   \begin{align*} 
      \mathfrak h_r(z) &= \dfrac{{\rm i}^r}{\Gamma(r)}\, \Big\{ 2^{-r+1}\,
                          \psi_{r-1}\left(1 - {\rm i}\frac z2 \right) 
                        + (-2)^{-r+1}\,\psi_{r-1}\left(1+ {\rm i}\frac x2 \right) \nonumber\\ 
                &\qquad - \psi_{r-1}\left(1 - {\rm i}x \right) 
                        - (-1)^{r-1}\psi_{r-1}\left(1 + {\rm i}x\right)\Big\}\, ,
    \end{align*} 
which completes the proof of the theorem. 
\end{proof}

The restriction of \eqref{D5} to $\mathbb R$ yields

\begin{corollary}
For all $x \in \mathbb R, r \in \mathbb N$ we have 
   \[ \mathfrak h_r(x) = \begin{cases}
               \dfrac{{\rm i}(-1)^{\frac{r-1}2}}{\Gamma(r)}\, \Re\left\{ 2^{-r+1}\, 
               \psi_{r-1}\left(1 + {\rm i}\tfrac x2 \right)
             - \psi_{r-1}\left(1 + x \right)\right\}, & \quad r \quad {\rm odd}\\  \\
               \dfrac{{\rm i}(-1)^{\frac r2-1}}{\Gamma(r)}\, \Im\left\{ 2^{-r+1}\, 
               \psi_{r-1}\left(1 + {\rm i}\tfrac x2 \right)
             - \psi_{r-1}\left(1 + x \right)\right\}, & \quad r \quad {\rm even}
           \end{cases}\, .\]
\end{corollary}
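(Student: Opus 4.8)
The plan is to obtain this corollary as a direct specialization of Theorem~4.1, equation~\eqref{D5}, to real argument $z = x \in \mathbb R$, using only the mirror-symmetry property $\psi_{r-1}(\overline w) = \overline{\psi_{r-1}(w)}$ of the polygamma function together with the symmetry property $\mathfrak h_r(-z) = (-1)^{r+1}\mathfrak h_r(z)$ recorded in Section~3. First I would set $z=x$ in \eqref{D5} and note that the two polygamma arguments $1 - {\rm i}x/2$ and $1+{\rm i}x/2$ are complex conjugates, as are $1-{\rm i}x$ and $1+{\rm i}x$; hence $\psi_{r-1}(1-{\rm i}x/2) = \overline{\psi_{r-1}(1+{\rm i}x/2)}$ and $\psi_{r-1}(1-{\rm i}x) = \overline{\psi_{r-1}(1+{\rm i}x)}$.

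Next I would split into the parity of $r$. For $r$ odd, the exponent $-r+1$ is even, so $2^{-r+1} = (-2)^{-r+1}$, and the two half-argument terms combine into $2^{-r+1}\bigl(\psi_{r-1}(1+{\rm i}x/2) + \psi_{r-1}(1-{\rm i}x/2)\bigr) = 2\cdot 2^{-r+1}\,\Re\,\psi_{r-1}(1+{\rm i}x/2)$; similarly $(-1)^{r-1}=1$, so the two full-argument terms combine to $-2\,\Re\,\psi_{r-1}(1+{\rm i}x)$. Pulling out the scalar ${\rm i}^r/\Gamma(r)$ and writing ${\rm i}^r = {\rm i}\cdot{\rm i}^{r-1} = {\rm i}(-1)^{(r-1)/2}$ for odd $r$ then yields the claimed ``$r$ odd'' formula. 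For $r$ even, the exponent $-r+1$ is odd, so $(-2)^{-r+1} = -2^{-r+1}$, and the half-argument terms combine into $2^{-r+1}\bigl(\psi_{r-1}(1-{\rm i}x/2) - \psi_{r-1}(1+{\rm i}x/2)\bigr) = 2^{-r+1}\bigl(\overline{w} - w\bigr) = -2{\rm i}\,\Im\,w$ with $w = \psi_{r-1}(1+{\rm i}x/2)$; likewise $-(-1)^{r-1} = 1$, so the full-argument terms give $-\psi_{r-1}(1-{\rm i}x) + \psi_{r-1}(1+{\rm i}x) = 2{\rm i}\,\Im\,\psi_{r-1}(1+{\rm i}x)$. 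Again extracting ${\rm i}^r/\Gamma(r)$, now with ${\rm i}^r = {\rm i}^2\cdot{\rm i}^{r-2} = -(-1)^{r/2-1}\cdot(\text{sign bookkeeping})$, and absorbing the extra factor of ${\rm i}$ from the imaginary parts, produces the ``$r$ even'' formula with the factor $(-1)^{r/2-1}$.

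The main obstacle — really the only thing requiring care — is the sign and power-of-${\rm i}$ bookkeeping: matching $2^{-r+1}$ against $(-2)^{-r+1}$ and $(-1)^{r-1}$ under each parity, and then correctly reducing ${\rm i}^r$ times the extra ${\rm i}$ coming from $\overline w - w = -2{\rm i}\,\Im\,w$ so that the prefactors come out as $\tfrac{{\rm i}(-1)^{(r-1)/2}}{\Gamma(r)}$ and $\tfrac{{\rm i}(-1)^{r/2-1}}{\Gamma(r)}$ respectively. One should also note that although \eqref{D5} is stated for $r\in\mathbb N_2$, the case $r=1$ is covered separately by \eqref{D3}: taking real parts there, $\psi(1+{\rm i}x/2)+\psi(1-{\rm i}x/2) = 2\Re\,\psi(1+{\rm i}x/2)$ and similarly for the $\psi(1\pm{\rm i}x)$ pair, so $\mathfrak h_1(x) = 2{\rm i}\log 2 + 2{\rm i}\,\Re\{\psi(1+{\rm i}x/2) - \psi(1+{\rm i}x)\}$, which is purely imaginary as the ``$r$ odd'' branch with $r=1$ predicts (the $\log 2$ term arising as the boundary contribution one already saw in the first proof of Theorem~3.4). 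Beyond this algebra, nothing is needed: no new analytic input, no convergence issues, since all the polygamma series in play converge normally on $\mathbb R$ away from the (now empty, for real $x$) pole set.
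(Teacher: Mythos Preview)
Your approach is correct and is exactly the paper's: the paper offers no separate argument, merely recording that the corollary is ``the restriction of \eqref{D5} to $\mathbb R$'', and you carry out the mirror-symmetry and parity bookkeeping that this restriction entails. Your caution about the case $r=1$ is well placed --- \eqref{D5} is only asserted for $r\in\mathbb N_2$, and the formula coming from \eqref{D3} carries the extra $2{\rm i}\log 2$ term that the corollary's displayed expression does not; in the same vein, your careful algebra would uncover that the printed $\psi_{r-1}(1+x)$ should read $\psi_{r-1}(1+{\rm i}x)$ and that an overall factor~$2$ is missing from the prefactor.
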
\medskip

Finally, let us observe that the HE--series may also be connected with the original Eisenstein series, the  digamma function 
being the connecting link. In fact, for real $z=x$ (not possible for $z$ complex, because otherwise we cannot exploit the mirror 
symmetry formula for the digamma function, that is $\psi(1-{\rm i}z) = \psi\big(\overline{1+{\rm i}z}\big) = 
\overline{\psi(1+{\rm i}z)}$\,)

\begin{theorem}
\noindent ${\bf a)}$ For all $x \in \mathbb R \setminus \{0\}$ 
   \begin{equation} \label{D51}
      \mathfrak h_1(x) = 2{\rm i}\, \log 2 + 2{\rm i} \Re\Big\{\varepsilon_1\left({\rm i} 
                        \frac x2\right) 
                       - \varepsilon_1\big({\rm i} x\big) - \psi\left({\rm i} \frac x2\right) 
                       + \psi\big({\rm i} x\big)  \Big\}\, .
   \end{equation}
\noindent ${\bf b)}$ Also, we have
   \begin{equation} \label{X7}
      \mathfrak h_1(x) = 2{\rm i}\, \log 2 + 2{\rm i} \Re\Big\{\coth\left(\frac x2\right) 
                       - \coth(x) - \psi\left({\rm i} \frac x2\right) + \psi\big({\rm i} x\big)  \Big\}\, .
   \end{equation}
\noindent ${\bf c)}$ For all $x \in \mathbb R \setminus \{0\}$ and $r \in \mathbb N_2$ we have
   \begin{align*} 
      \mathfrak h_r(x) &= {\rm i}^r \left\{ 2^{-r+1}\left( 
                     \varepsilon_r\big({\rm i}\tfrac x2\big) 
                   + (-1)^{r-1} \varepsilon_r\big(-{\rm i}\tfrac x2\big)\right) 
                   - \varepsilon_r({\rm i}x) - (-1)^{r-1} \varepsilon_r(-{\rm i}x)\right\} \\
           &\qquad + \dfrac{(-1)^{r-1}{\rm i}^r}{\Gamma(r)} \,
                     \left\{ 2^{-r+1}\left( \psi_{r-1}\big({\rm i}\tfrac x2\big) 
                   + (-1)^{r-1} \psi_{r-1}\big(-{\rm i}\tfrac x2\big)\right) \right. \\ 
           &\qquad - \left. \psi_{r-1}({\rm i}x) - (-1)^{r-1} \psi_{r-1}(-{\rm i}x)\right\}\,.
   \end{align*}
\end{theorem}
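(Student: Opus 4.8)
My plan is to derive everything from the two closed forms already proved in \S4, equations \eqref{D3} for $\mathfrak h_1$ and \eqref{D5} for $\mathfrak h_r$ ($r\ge2$), in which $\mathfrak h_r$ is written through digamma, resp.\ polygamma, values at the \emph{shifted} arguments $1\mp{\rm i}z/2$ and $1\mp{\rm i}z$. The only tool needed is the reflection identity: for $r=1$ it is \eqref{A1} in the form $\psi(1-w)=\varepsilon_1(w)+\psi(w)$, and for $r\ge2$ it is the Proposition of \S1 rearranged as $\psi_{r-1}(1-w)=\Gamma(r)\,\varepsilon_r(w)-(-1)^r\psi_{r-1}(w)$; applying it once more at $-w$, and invoking $\varepsilon_r(-w)=(-1)^r\varepsilon_r(w)$ (with $\varepsilon_1$ odd), also disposes of the terms carrying $1+{\rm i}z/2$ and $1+{\rm i}z$. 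In this way each ``$1\mp$'' shift is traded for an Eisenstein value at an unshifted imaginary argument together with an unshifted polygamma value.

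For \textbf{part (c)} I substitute these four reflection identities into \eqref{D5}: the $\Gamma(r)$ so produced cancels the $1/\Gamma(r)$ in front, the $\varepsilon_r$-terms collect --- after writing $(-2)^{-r+1}=(-1)^{r-1}2^{-r+1}$ --- into the first brace of the assertion, and the surviving polygamma terms collect, with prefactor $(-1)^{r-1}{\rm i}^r/\Gamma(r)$, into the second. \textbf{Part (a)} is the $r=1$ instance of the same computation, run from \eqref{D3} in place of \eqref{D5}; here the constant $2{\rm i}\log2$ is simply carried along from \eqref{D3}, not generated by the algebra. Then, for real $z=x$, the mirror symmetry $\psi_{r-1}(\overline w)=\overline{\psi_{r-1}(w)}$ (likewise $\varepsilon_r(\overline w)=\overline{\varepsilon_r(w)}$) turns each conjugate pair $\varphi({\rm i}x\alpha)+\varphi(-{\rm i}x\alpha)$ into $2\Re\,\varphi({\rm i}x\alpha)$ --- this is exactly where the hypothesis $x\in\mathbb R$ is used --- and brings (a) and (c) into the stated $\Re$-form. (In all cases the Eisenstein contributions are purely imaginary for $r=1$, resp.\ cancel identically by $\varepsilon_r(-w)=(-1)^r\varepsilon_r(w)$, so they vanish under $\Re$, resp.\ outright; they are kept only to display the announced link between $\mathfrak h_r$ and $\varepsilon_r$.) \textbf{Part (b)} follows from (a) by inserting the cotangent evaluation $\varepsilon_1({\rm i}\alpha)=\pi\cot(\pi{\rm i}\alpha)=-{\rm i}\pi\coth(\pi\alpha)$ of \S1 for $\varepsilon_1({\rm i}x/2)$ and $\varepsilon_1({\rm i}x)$.

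The computation is entirely elementary, so the real work --- and essentially the only place slips can occur --- is bookkeeping: keeping the signs $(-1)^r$, $(-2)^{-r+1}$, ${\rm i}^r$ consistent across four simultaneous substitutions, remembering that $2{\rm i}\log2$ belongs to the $r=1$ case alone, and verifying that the simple poles $({\rm i}z)^{-r}$ which sit inside the individual $\varepsilon_r$- and $\psi_{r-1}$-pieces at $z=0$ cancel in the sum, consistently with $\mathfrak h_r$ being regular there. Routing every shift through the Proposition (rather than through the bare polygamma recurrence) keeps the $\varepsilon_r$/polygamma split automatic and makes that last cancellation transparent.
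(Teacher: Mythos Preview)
Your approach is essentially the paper's own: start from the digamma/polygamma representations \eqref{D3} and \eqref{D5} of Theorem~4.1, feed the four shifted arguments $1\pm{\rm i}x/2$, $1\pm{\rm i}x$ through the reflection identity \eqref{A1} (respectively its differentiated form in Proposition~1.2), and then invoke mirror symmetry for real $x$ to pass to real parts. The paper's proof says exactly this in two sentences, and also leaves part~\textbf{b)} implicit, to be read off from~\textbf{a)} via $\varepsilon_1({\rm i}\alpha)=\pi\cot(\pi{\rm i}\alpha)$ just as you do; your write-up simply makes the bookkeeping (and the observation that the $\varepsilon_r$-contributions are formally redundant under $\Re$) explicit.
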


\begin{proof}
The reflection formula \eqref{A1} gives an efficient tool connecting Eisenstein and Hilbert--Eisenstein series. Indeed, replacing 
here $\pm {\rm i}x/2, \pm {\rm i}x$ for $z$, the asserted relation \eqref{D51} follows from Theorem 4.1, \eqref{D3}. 

Now, by the differentiation property \eqref{C411} and \eqref{D5} we connect the HE series $\mathfrak h_r(z)$ and the Eisenstein series 
$\varepsilon_r(z)$, yielding part {\bf c)} for $r \in \mathbb N_2$. 
\end{proof} 

\section{The $\Omega(z)$--function and its properties} 

This section devoted to the $\Omega$--function begins with the cases $r=1$ and $r=2$ of Definition 3.1 thus 
   \begin{align} \label{G1}
	    \frac{{\rm i}\pi \Omega(2\pi z)}{\sin(\pi z)} &= \mathfrak h_1(z), \qquad \big( z\in \mathbb C \setminus {\rm i}\mathbb Z\big) \\
			\mathfrak h_2(z) &= 2{\rm i}z \sum_{k \in \mathbb N} \frac{(-1)^{k-1}2k}{(k^2+z^2)^2} 
			                  = 2{\rm i}z \widetilde S_2(z), \nonumber
   \end{align}
the latter following directly from its definition, having in mind, and noting that the $\mathfrak h_2(z)$ and $\widetilde S_2(z)$ 
are connected {\em via} $\mathfrak h_2(z) = 2{\rm i}z \widetilde S_2(z)$ (see \eqref{D2} as well). An immediate consequence of 
Theorem 4.1 or also of Corollary 3.5 is 

\begin{proposition} 
For all $x \in \mathbb R$, $\Omega(x)$ has the representation 
   \begin{align*} 
	    \Omega(x) &= \frac1\pi\,\sinh\left(\frac x2\right)\,\Big\{ 2 \log2 + \psi\left(1 + {\rm i}\frac x{4\pi} \right) 
                 + \psi\left(1 - {\rm i}\frac x{4\pi} \right) \\
								&\qquad - \psi\left(1 + {\rm i}\frac x{2\pi} \right) - \psi\left(1 - {\rm i}\frac x{2\pi} \right) \Big\}.
   \end{align*}
\end{proposition}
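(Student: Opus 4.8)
The plan is to derive Proposition 5.2 directly from the representation of $\mathfrak h_1$ already established. Recall from Corollary 3.5 (or equivalently from Theorem 3.3) that we have the identity
\[
   \Omega(z) = -\frac{\mathrm i}\pi \sinh\!\Big(\frac z2\Big)\cdot \mathfrak h_1\!\Big(\frac{z}{2\pi}\Big),
\]
which is recorded in the excerpt immediately after Corollary 3.5. The strategy is simply to substitute $z = x \in \mathbb R$ into this relation and then invoke the complex (first) form of Theorem 3.4 (equivalently Theorem 4.1, equation \eqref{D3}) for $\mathfrak h_1(x/2\pi)$, noting that for $x\in\mathbb R$ the argument $x/2\pi$ lies in $\mathbb R \subset (\mathbb C\setminus\mathrm i\mathbb Z)\cup\{0\}$, so \eqref{D3} is legitimately applicable on the whole real line (this is exactly where the extended range from Theorem 4.1 buys us something over the $|z|<2\pi$ restriction of Theorem 3.4).

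First I would write out $\mathfrak h_1(x/2\pi)$ using \eqref{D3}: with $w = x/2\pi$ this is
\[
   \mathfrak h_1(w) = 2\mathrm i\log 2 + \mathrm i\Big\{\psi\big(1+\tfrac{\mathrm i w}{2}\big) + \psi\big(1-\tfrac{\mathrm i w}{2}\big) - \psi(1+\mathrm i w) - \psi(1-\mathrm i w)\Big\}.
\]
Then substituting $w = x/2\pi$ gives the four digamma arguments $1 + \tfrac{\mathrm i x}{4\pi}$, $1 - \tfrac{\mathrm i x}{4\pi}$, $1 + \tfrac{\mathrm i x}{2\pi}$, $1 - \tfrac{\mathrm i x}{2\pi}$, matching precisely those appearing in the claimed formula for $\Omega(x)$. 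Multiplying by $-\tfrac{\mathrm i}\pi \sinh(x/2)$ turns the leading $2\mathrm i\log 2$ into $\tfrac{1}{\pi}\sinh(x/2)\cdot 2\log 2$ and turns each $\mathrm i\,\psi(\cdots)$ term into $\tfrac1\pi\sinh(x/2)\,\psi(\cdots)$, since $-\tfrac{\mathrm i}\pi\cdot\mathrm i = \tfrac1\pi$. Collecting everything under the common prefactor $\tfrac1\pi\sinh(x/2)$ yields exactly the asserted expression, so the computation is a short and mechanical bookkeeping of the constant $-\mathrm i/\pi$ against the $\mathrm i$'s in \eqref{D3}.

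There is essentially no serious obstacle here; the content has already been done in Sections 3 and 4, and Proposition 5.2 is a restriction-and-substitution corollary. The only point requiring a word of care is the justification that $\Omega(z) = -\tfrac{\mathrm i}\pi\sinh(z/2)\,\mathfrak h_1(z/2\pi)$ holds for all real $x$ and not merely for $|x| < 2\pi$: this follows because the third proof of Theorem 3.4 (Theorem 4.1) establishes \eqref{D3} on the full domain $(\mathbb C\setminus\mathrm i\mathbb Z)\cup\{0\}$ via the termwise-integrable series $\mathfrak h_2$, and both sides of the $\Omega$-$\mathfrak h_1$ identity are real-analytic (indeed meromorphic) in $z$, so the identity, valid near the origin, propagates by analytic continuation to all of $\mathbb R$. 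With that remark in place the proposition follows, and I would simply state that it is an immediate consequence of Theorem 4.1 together with the $\Omega$-$\mathfrak h_1$ relation displayed after Corollary 3.5, omitting the routine algebra.
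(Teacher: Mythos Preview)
Your proposal is correct and matches the paper's approach exactly: the paper introduces this proposition with the remark ``An immediate consequence of Theorem 4.1 or also of Corollary 3.5 is'' and gives no further details, so your substitution of $w = x/2\pi$ into \eqref{D3} combined with the $\Omega$--$\mathfrak h_1$ relation is precisely what is intended. Your added remark about the extended range from Theorem 4.1 ensuring validity on all of $\mathbb R$ is a useful clarification that the paper leaves implicit.
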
 

The next basic property, essentially stated in \cite{BBP}, concerns $\Omega(x)$ as a solution of ODE. 

It would be of great interest to know whether one can express $\psi({\rm i}x) - \psi({\rm i}\tfrac x2)$ in terms of the coth--function 
(or any another hyperbolic function). 

\begin{theorem}
For all $x \in \mathbb R$ the $\Omega(x)$ function is a particular solution of the following linear ${\rm ODE}$:
   \begin{equation} \label{G2}
	    y\,' = \dfrac12\, \coth\left( \frac x2\right)\,y - \dfrac{x}{\pi^3}\, \sinh\left( \frac x2\right)\, E(x)\, ,
	 \end{equation}
where
   \[ E(x) = \begin{cases}
                \widetilde{S}_2(x) = \dfrac1{2x} \displaystyle \int_0^\infty 
								\frac{u\,\sin(xu)\,{\rm d}u}{{\rm e}^u+1}, & \qquad x \neq 0 \\ \\
                2\, \eta(3),        & \qquad x = 0
             \end{cases}\, .\]
\end{theorem}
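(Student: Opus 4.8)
The plan is to verify the ODE \eqref{G2} directly from the representation of $\Omega$ in Proposition 5.1, differentiating it and then re-packaging the result in terms of $\coth(x/2)$ and the alternating Mathieu series $\widetilde S_2$. First I would write, for $x \neq 0$,
\[
   \Omega(x) = \frac{1}{\pi}\,\sinh\!\Big(\frac x2\Big)\,\Phi(x),
   \qquad
   \Phi(x) = 2\log 2 + \psi\!\Big(1+\tfrac{{\rm i}x}{4\pi}\Big) + \psi\!\Big(1-\tfrac{{\rm i}x}{4\pi}\Big)
           - \psi\!\Big(1+\tfrac{{\rm i}x}{2\pi}\Big) - \psi\!\Big(1-\tfrac{{\rm i}x}{2\pi}\Big),
\]
so that $\Omega' = \tfrac{1}{2\pi}\cosh(x/2)\,\Phi + \tfrac{1}{\pi}\sinh(x/2)\,\Phi'$. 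The first term is exactly $\tfrac12\coth(x/2)\,\Omega(x)$, which is the linear part of \eqref{G2}. Hence it remains to show that the inhomogeneous term satisfies
\[
   \frac{1}{\pi}\,\sinh\!\Big(\frac x2\Big)\,\Phi'(x) = -\frac{x}{\pi^{3}}\,\sinh\!\Big(\frac x2\Big)\,\widetilde S_2(x),
   \qquad\text{i.e.}\qquad
   \Phi'(x) = -\frac{x}{\pi^{2}}\,\widetilde S_2(x).
\]

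To obtain this I would differentiate $\Phi$ termwise using $\psi'=\psi_1$ (the trigamma function), giving
\[
   \Phi'(x) = \frac{{\rm i}}{4\pi}\Big[\psi_1\!\Big(1+\tfrac{{\rm i}x}{4\pi}\Big) - \psi_1\!\Big(1-\tfrac{{\rm i}x}{4\pi}\Big)\Big]
            - \frac{{\rm i}}{2\pi}\Big[\psi_1\!\Big(1+\tfrac{{\rm i}x}{2\pi}\Big) - \psi_1\!\Big(1-\tfrac{{\rm i}x}{2\pi}\Big)\Big].
\]
Now I would invoke the identity $\mathfrak h_2(z) = 2{\rm i}z\,\widetilde S_2(z)$ recorded in \eqref{G1}, together with the closed form \eqref{C41} for $\mathfrak h_2$ in terms of trigamma functions; evaluating \eqref{C41} at $z = x/(2\pi)$ and using $\psi_1(1-{\rm i}w)=\overline{\psi_1(1+{\rm i}w)}$ shows that the bracketed differences above are precisely the combination appearing in $\mathfrak h_2(x/(2\pi))$. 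Matching constants then yields $\Phi'(x) = -\tfrac{x}{\pi^{2}}\widetilde S_2(x)$, which is what was needed. Finally, the integral formula for $E(x)$ follows from the standard representation $\widetilde S_2(x) = \tfrac1{2x}\int_0^\infty \tfrac{u\sin(xu)}{{\rm e}^u+1}\,{\rm d}u$ (differentiate under the integral in $x$ the known formula for $\widetilde S_1$, or expand $({\rm e}^u+1)^{-1}=\sum(-1)^{k-1}{\rm e}^{-ku}$ and integrate termwise against $u\sin(xu)$), and the value at $x=0$ is $\widetilde S_2(0)=\sum_{k\in\mathbb N}(-1)^{k-1}2k/k^4 = 2\sum_{k\in\mathbb N}(-1)^{k-1}k^{-3}=2\eta(3)$, with continuity at $0$ handled by the $\sinh(x/2)\to 0$ factor killing any apparent singularity.

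The main obstacle I expect is bookkeeping rather than anything conceptual: one must keep careful track of the scaling $z\mapsto x/(2\pi)$ linking $\Omega(2\pi z)$ and $\mathfrak h_1(z)$ (hence the $4\pi$ versus $2\pi$ arguments of the digamma/trigamma functions), and confirm that the constant terms — the $2\log 2$ in $\Phi$ disappears under differentiation, and no stray additive constant survives in the trigamma combination — so that the equality $\Phi'(x)=-x\pi^{-2}\widetilde S_2(x)$ holds on the nose, not merely up to a constant. A secondary point is justifying termwise differentiation of $\Phi$, which is immediate since the trigamma series converges locally uniformly on $\mathbb C\setminus{\rm i}\mathbb Z$, and likewise justifying the passage to the integral form of $\widetilde S_2$ by dominated convergence. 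Once the identity for $\Phi'$ is in hand, assembling \eqref{G2} is a one-line substitution.
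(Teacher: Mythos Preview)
Your approach is essentially the same as the paper's, just unpacked one level further. The paper differentiates the relation \eqref{G1}, $\mathfrak h_1(z)=\dfrac{{\rm i}\pi\,\Omega(2\pi z)}{\sinh(\pi z)}$, directly: the product rule produces the $\coth$--term, and $\mathfrak h_1'=-\mathfrak h_2=-2{\rm i}z\,\widetilde S_2(z)$ supplies the inhomogeneous part; then a change of variable $2\pi z\mapsto x$ gives \eqref{G2}. You do the same thing but first rewrite $\mathfrak h_1$ as the digamma combination $\Phi$ (Proposition~5.1), differentiate to trigammas, and then repackage those trigammas back into $\mathfrak h_2$ via \eqref{C41} before invoking $\mathfrak h_2=2{\rm i}z\,\widetilde S_2$. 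Since $\Phi(x)=-{\rm i}\,\mathfrak h_1(x/(2\pi))$, the two routes are literally the same computation; the paper's is just shorter because it never leaves the $\mathfrak h$--notation.

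One caution on the bookkeeping you flagged: after evaluating \eqref{C41} at $z=x/(2\pi)$ you obtain $\Phi'(x)=-\dfrac{x}{2\pi^{2}}\,\widetilde S_2\!\big(\tfrac{x}{2\pi}\big)$, not $-\dfrac{x}{\pi^{2}}\widetilde S_2(x)$; the argument of $\widetilde S_2$ picks up the $2\pi$ scaling and there is an extra factor $\tfrac12$. This is exactly the rescaling the paper performs in its final substitution step, so you will want to track it through to see how $E(x)$ is actually normalized.
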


\begin{proof} Differentiating $\mathfrak h_1(x)$ (or which is the same $\Omega(2\pi x)$) of \eqref{G1} with respect to 
$x \neq 0$, this results in 
   \[ \Omega'(2\pi x) - \frac12 \coth(\pi x)\, \Omega(2\pi x) 
           = -\,\frac{2x \sinh(\pi x)}{\pi^2\, }\,\widetilde{S}_2(x)\, .\]
Substituting $x \mapsto 2\pi x$ we confirm \eqref{G2} for $x \neq 0$. It remains to prove the case $x = 0$, which follows 
by continuity argument is equivalent to the asserted ODE, because 
   \[ E(0) = \lim_{h \to 0}\widetilde S_2\left( h \right) = 2\, \eta(3)\, .\] 
The integral form of $E(x)$ is already reported e.g. in \cite[Eq. (2.8)]{PST}. 
\end{proof}

\begin{theorem} \rm{\cite{BPS}}\, ${\bf a)}$ For all $x \geq 0$ the following two--sided bounding inequalities hold true:
   \[ \frac1\pi \sinh \Big( \frac x2\Big)\, \log\bigg(\frac{\zeta(3)x^2+8\pi^2}{3x^2+2\pi^2}\bigg) 
            \leq \Omega(x) \leq \frac1\pi \sinh \Big( \frac x2\Big)\, \log\bigg(\frac{3 x^2+8\pi^2}{\zeta(3)x^2+2\pi^2}\bigg)\, . \]
Moreover, for $x<0$ the two--sided inequality is reversed. \medskip  

\noindent ${\bf b)}$ For the asymptotic behavior of $\Omega(x)$ for large values of $x$ we have 
   \[ \left( \frac1{2\pi}\, \log\frac{\zeta(3)}{3}\right) \, {\rm e}^{\frac x2} \le \Omega(x) \le \left(\frac1{2\pi}\,  
                  \log\frac3{\zeta(3)}\right) \, {{\rm e}^{\frac x2}},  \qquad \big( x  \to \infty\big)\, .
									\medskip \]
\end{theorem}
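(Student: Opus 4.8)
The plan is to reduce both bounds to monotonicity and convexity estimates for the kernel function appearing in Theorem 5.2, passing through the closed form $\Omega(x) = \tfrac1\pi \sinh\!\big(\tfrac x2\big)\, G(x)$ with $G(x) := 2\log 2 + \psi\big(1+\tfrac{\mathrm ix}{4\pi}\big) + \psi\big(1-\tfrac{\mathrm ix}{4\pi}\big) - \psi\big(1+\tfrac{\mathrm ix}{2\pi}\big) - \psi\big(1-\tfrac{\mathrm ix}{2\pi}\big)$ furnished by Proposition 5.1. Since $\tfrac1\pi\sinh(\tfrac x2)$ is positive for $x>0$ and odd, and the claimed bounds flip accordingly, it suffices to sandwich $G(x)$ between the two logarithms $\log\!\big(\tfrac{\zeta(3)x^2+8\pi^2}{3x^2+2\pi^2}\big)$ and $\log\!\big(\tfrac{3x^2+8\pi^2}{\zeta(3)x^2+2\pi^2}\big)$ for all $x\ge 0$. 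The natural route is to represent $G(x)$ as a convergent series: using $\psi(1+w)+\psi(1-w) = -2\boldsymbol\gamma + 2\sum_{k\ge 1}\zeta(2k+1)(-1)^{k-1}\,\text{(appropriate powers)}$, more precisely via \eqref{B4}, one gets $G(x) = 2\log 2 + 2\sum_{k\in\mathbb N}(-1)^{k-1}\zeta(2k+1)\big[(\tfrac{x}{2\pi})^{2k} - (\tfrac{x}{4\pi})^{2k}\big]$, which telescopes neatly since $(\tfrac{x}{2\pi})^{2k}-(\tfrac{x}{4\pi})^{2k} = (\tfrac{x}{2\pi})^{2k}(1-4^{-k})$. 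Even cleaner: recognize $G(x) = -\tfrac{\mathrm i}{x/(2\pi)}\cdot(\text{something})$ — but the most transparent is to use Theorem 4.1 directly, $\Omega(x) = -\tfrac{\mathrm i}\pi \sinh(\tfrac x2)\,\mathfrak h_1(\tfrac{x}{2\pi})$ together with \eqref{X4}, giving $G(x) = 2\sum_{n\ge 0}(-1)^n\,\eta(2n+1)\big(\tfrac{x}{2\pi}\big)^{2n}$ after normalization; with $\eta(1)=\log 2$ this is an honest power series in $t:=(\tfrac{x}{2\pi})^2$ with alternating-sign coefficients $2(-1)^n\eta(2n+1)$.

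The core of the argument is then an elementary comparison of two analytic functions of $t\ge 0$: the series $F(t) := 2\sum_{n\ge 0}(-1)^n\eta(2n+1)t^n$ and the two rational-log candidates $L_\pm(t) := \log\!\big(\tfrac{a_\pm t + c_\pm}{b_\pm t + d_\pm}\big)$ (after dividing numerator and denominator by $4\pi^2$ to match the variable $t$). I would verify: (i) both sides agree to first order at $t=0$, i.e. $F(0)=2\log 2$ and $L_\pm(0)=\log 4 = 2\log 2$, which pins the constant; (ii) the derivative $F'(0) = -2\eta(3)$ is squeezed between $L_-'(0)$ and $L_+'(0)$, which is exactly what forces the constants $3$ and $\zeta(3)$ to appear (one computes $L_\pm'(0)$ in terms of $a_\pm,b_\pm,c_\pm,d_\pm$ and checks $\zeta(3)<\text{stuff}<3$, using $\zeta(3)\approx 1.202$ and that $\eta(3)=\tfrac34\zeta(3)$); and (iii) a global monotonicity-of-ratio argument — the cleanest being that the function $t\mapsto (F(t)-L_\pm(t))$ has a sign-definite derivative, or equivalently that $e^{F(t)}$ lies between the two Möbius functions $\tfrac{a_\pm t+c_\pm}{b_\pm t+d_\pm}$ for all $t\ge0$. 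For (iii) one exploits that $F$ is completely monotone-type in a suitable sense (its coefficients alternate and $\eta(2n+1)\downarrow 1$), so $F$ is decreasing, convex, ..., while each $L_\pm$ is a monotone function of $t$ with a single inflection; comparing the two via their values and slopes at $t=0$ and their common limit behavior as $t\to\infty$ (where $F(t)\to$ the $n=0$ term dominates only if... — actually $F$ diverges, so one argues on $\log$-scale) closes the estimate.

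Part (b) is then a corollary of part (a): divide the two-sided inequality of (a) by $\tfrac1{2\pi}e^{x/2}$ and let $x\to\infty$. Since $\sinh(\tfrac x2) = \tfrac12 e^{x/2}(1+o(1))$ and the argument of each logarithm tends to $\tfrac{\zeta(3)}{3}$ (lower) resp. $\tfrac{3}{\zeta(3)}$ (upper) as $x\to\infty$ — because $\tfrac{\zeta(3)x^2+8\pi^2}{3x^2+2\pi^2}\to\tfrac{\zeta(3)}{3}$ — the claimed asymptotic envelope follows immediately, with the factor $\tfrac1{2\pi}$ coming from $\tfrac1\pi\cdot\tfrac12$.

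\textbf{Main obstacle.} The delicate point is step (iii): turning the matched value-and-slope data at $t=0$ into a \emph{global} inequality on $[0,\infty)$. The pointwise agreement at the origin and the first-derivative sandwich are routine, but one must rule out a crossing of $F$ and $L_\pm$ at some positive $t$. I expect this to require either (a) a clever choice of the rational functions so that $\log\big(\tfrac{a_+t+c_+}{b_+t+d_+}\big) - F(t)$ has a manifestly one-signed derivative — likely by writing that derivative over a common denominator and checking the numerator is a power series with one-signed coefficients — or (b) an integral-representation comparison, using $\psi(1+w)+\psi(1-w)+2\boldsymbol\gamma = \int_0^\infty\big(\tfrac2t - \tfrac{2}{t}\cdot\tfrac{\cos(\cdot)}{\cdots}\big)\,\mathrm dt$-type formulas (cf. the $\Re\,\psi$ integral displayed after the first proof of Theorem 3.4) to get $G(x)$ as a single positive integral, against which the log-bounds can be tested by a pointwise estimate on the integrand. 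Since the theorem is attributed to \cite{BPS}, the intended proof presumably carries out exactly such a one-signed-numerator computation; I would follow that template, and the only real work is the bookkeeping needed to confirm that $3$ and $\zeta(3)$ are the optimal constants compatible with the $t=0$ data.
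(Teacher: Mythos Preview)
For part \textbf{b)} your approach is exactly what the paper does: it takes the two-sided inequality of \textbf{a)}, uses $\sinh(\tfrac x2)\sim\tfrac12 e^{x/2}$, and lets the arguments of the logarithms tend to $\zeta(3)/3$ and $3/\zeta(3)$ respectively. Nothing more is said or needed.

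For part \textbf{a)} the present paper gives no proof at all; it simply cites \cite{BPS}. So there is nothing in this text to compare your sketch against. That said, your proposed route has a genuine gap, which you half-notice yourself: the series representation $G(x)=F(t)=2\sum_{n\ge 0}(-1)^n\eta(2n+1)\,t^n$ with $t=(x/2\pi)^2$ converges only for $|x|<2\pi$, whereas the inequality is asserted for \emph{all} $x\ge 0$. Your parenthetical ``actually $F$ diverges'' is the symptom, not the cure: you cannot compare $F(t)$ to the rational-log candidates $L_\pm(t)$ on $[0,\infty)$ by manipulating series coefficients when the series lives only on $[0,1)$. Any argument past $t=1$ has to return to the $\psi$ or $\mathfrak h_1$ representation, at which point the alternating-coefficient structure your plan leans on is gone.

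The cited source \cite{BPS} has ``A linear ODE for the Omega function'' in its very title, and here Theorem 5.3 is placed immediately after the ODE of Theorem~5.2, with the paper remarking afterwards that bounds of this type come from the \v Caplygin differential inequality. The intended argument therefore almost certainly runs through the ODE
\[
   y' = \tfrac12\coth\!\big(\tfrac x2\big)\,y - \tfrac{x}{\pi^3}\sinh\!\big(\tfrac x2\big)\,\widetilde S_2(x):
\]
one bounds the forcing term $\widetilde S_2(x)$ above and below by explicit rational functions of $x^2$ (this is where $\zeta(3)$ and $3$ enter, via $\widetilde S_2(0)=2\eta(3)=\tfrac32\zeta(3)$ and the elementary comparison $\tfrac{1}{(k^2+x^2)^2}\le\tfrac1{k^2(k^2+x^2)}$), and then integrates the resulting differential inequalities to get global sub- and super-solutions. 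That mechanism is valid for all $x\ge 0$ by construction and avoids the convergence obstruction entirely. Your plan would need to be rebuilt along these lines to reach the full range.
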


See Figure 1 for the graphs of part {\bf a)}.

\begin{figure}[!ht]
    \begin{center}
        \includegraphics[width=0.64\textwidth]{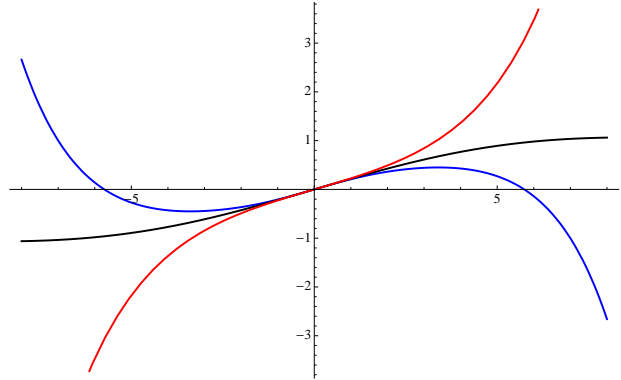}    
    \caption{For $\Omega(x)$ (Theorem 5.3. part {\bf a)}) with $x \in [-8,8]$; for $x>0$ upper bound -- red, lower 
		bound -- blue, $\Omega(x)$ -- black; for $x<0$ {\it vice versa}.}
		\end{center} 
\end{figure} 

See Figure 2 where graph of $\Omega(x)$ lies within horn--type bounds. As to the proof of {\bf b)}, also announced in 
\cite[Theorem 4]{BPS}, we only have to apply the asymptotic of the lower bound in {\bf a)}:  
   \[ \sinh \Big( \frac x2\Big)\, \log\bigg(\frac{\zeta(3)x^2+8\pi^2}{3x^2+2\pi^2}\bigg) 
	          \sim {\rm e}^{\frac x2}\, \log \frac{\zeta(3)}3, \qquad (x \to \infty);\]			
the same procedure leads to the upper bound in Theorem 5.3. {\bf b)}. 

\begin{figure}[!ht]
    \begin{center}
        \includegraphics[width=0.80\textwidth]{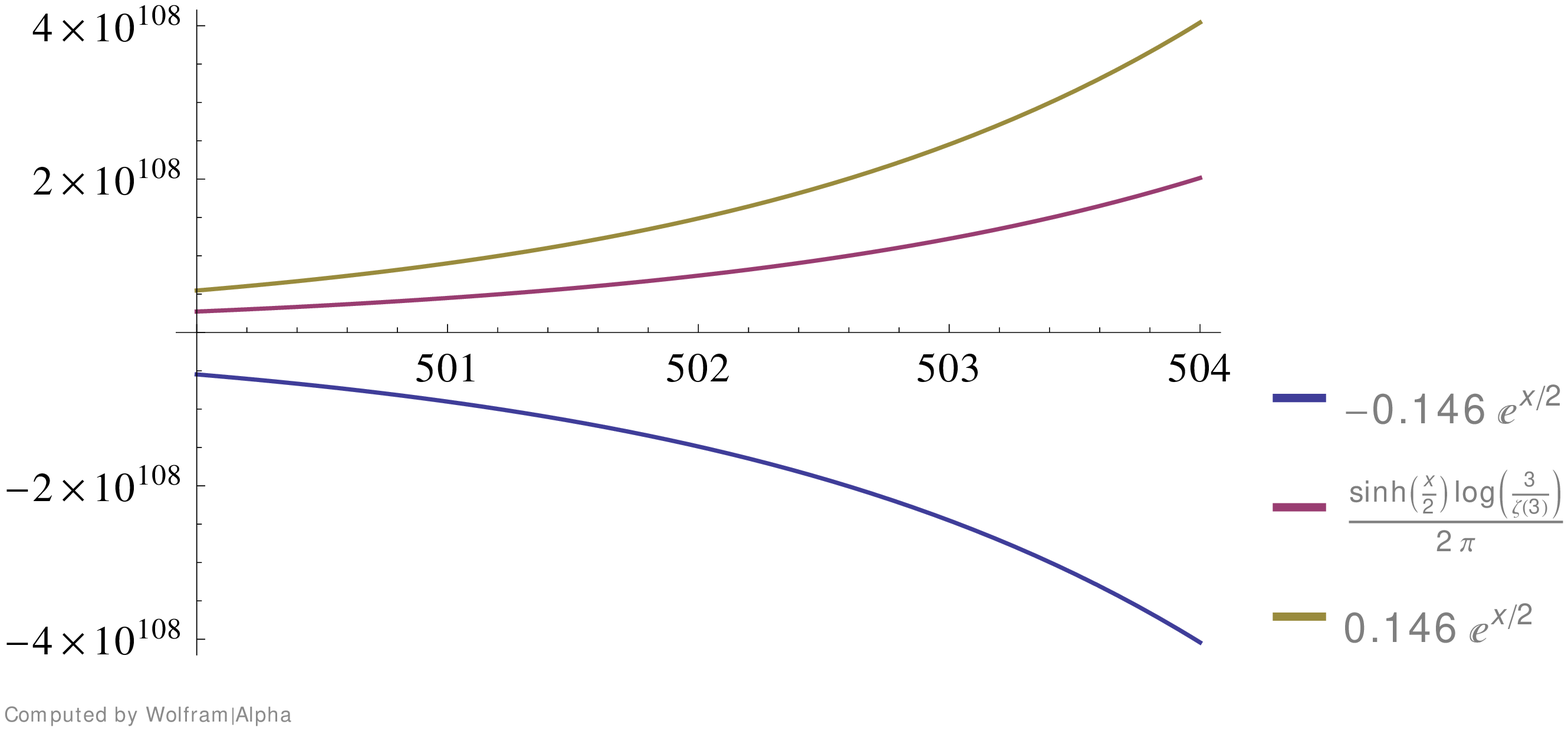}    
    \caption{The two--sided bounds for $\Omega(x)$ (Theorem 5.3. part {\bf b)}) with $x \in [500, 504]$; legend: upper 
		bound -- brown, lower bound -- blue. Here the approcimations $\frac1{2\pi}\,\log\frac3{\zeta(3)} \approx 0.146$ and $\Omega(x) 
		\sim \tfrac1{2\pi}\,\log\tfrac3{\zeta(3)}\,\sinh \left( \tfrac x2\right)$ (violet) are used.}
		\end{center} 
\end{figure}

Obviously, by observing Figure 1 we conclude that the elegant but not so precise bilateral bounding inequality exposed in 
Theorem 5.3 could perhaps be improved. Having in mind connections (see e.g. Theorem 5.2 and its proof) between the Omega function and 
the alternating Mathieu series $\widetilde S_2$, various estimates for $\Omega$ and their efficiency have been considered in 
\cite{PTL}; these approximants are consequences of the \v Caplygin differential inequality. However, we will not consider this 
question here, since it deserves a separate retrospect. 
					
Further, power series characterization of the complete Omega function are established in the sequel.

\begin{theorem}
For the complete Omega--function
   \[ \Omega(z) := 2 \int_{0+}^{\frac12} \sinh (z u) \cot (\pi u)\, {\rm d}u, \qquad \big( z \in \mathbb C\big)\, ,\] 
there hold the properties:
\begin{enumerate}
\item[${\bf (i)}$] {\rm (Partial fraction expansion)}
   \[ \Omega(z) = 2{\rm i} \sinh\Big( \frac z2\Big) \underset{k \in \mathbb Z}{\sum\nolimits_e} \, 
	                \dfrac{(-1)^{k-1} {\rm sgn}(k)}{z+2\pi{\rm i}k}; \]
\item[${\bf (ii)}$] {\rm (Taylor--series expansion ${\rm I.}$)}
   \begin{equation} \label{X81}
	    \Omega(z) = \sum_{k\in \mathbb N_0}\, \Omega_{2k+1} \frac{z^{2k+1}}{(2k+1)!}\,, \qquad (z \in \mathbb C)\,, 
	 \end{equation}
where $\Omega_k$ are the moments of $2\,\cot(\pi u)$, thus
   \[ \Omega_k = 2 \int_{0+}^{\frac12} u^{k} \cot (\pi u)\, {\rm d}u = D_z^k\, \Omega(z)\big|_{z=0}\, ;\]
\item[${\bf (iii)}$] {\rm (Taylor--series expansion ${\rm II.}$)} 
   \begin{equation} \label{X9} 
	    \Omega(z) = \sum_{k \in \mathbb N_0} \frac1{2^{2k}}\, \left\{\sum_{n=0}^k \frac{(-1)^n\,\eta(2n+1)}
			             {\pi^{2n+1}\,(2(k-n)+1)!}\right\}\,z^{2k+1}\,,\qquad (|z|<2\pi);
	 \end{equation}
\item[${\bf (iv)}$] {\rm (Mirror symmetry formula)}
   \[ \Omega(\overline{z}) := \Omega(x+{\rm i}y) = \overline{\Omega(z)}\,, \qquad (z \in \mathbb C);	\]
\item[${\bf (v)}$] {\rm (Reflexivity properties)}
   \begin{align*}
	    \Re\, \Omega(x+{\rm i}y) &= \Re\, \Omega(x-{\rm i}y), \\
	    \Im\, \Omega(x+{\rm i}y) &= - \Im\, \Omega(x-{\rm i}y), \\
			\Im\, \Omega(x) = 0, &\quad \Re\,\Omega(\pm {\rm i}y) = 0.
	 \end{align*}
\end{enumerate}
\end{theorem}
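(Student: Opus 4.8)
The plan is to obtain all five items directly from the integral definition of $\Omega$ together with the partial fraction expansion \eqref{B01} and Proposition 3.2, so that nothing beyond what is already available in the paper is needed. For ${\bf (i)}$ I would replace $z$ by $z/(2\pi)$ in \eqref{B01}; since $\frac{1}{z/(2\pi)+{\rm i}k} = \frac{2\pi}{z+2\pi{\rm i}k}$, the prefactor $-\tfrac{{\rm i}}{\pi}$ absorbs the $2\pi$ to give $-2{\rm i}\sinh(z/2)$, and writing $-(-1)^k = (-1)^{k-1}$ produces exactly the asserted identity. The Eisenstein sum on the right converges (conditionally) precisely because the one in \eqref{B01} does, and at the non-zero points of $2\pi{\rm i}\mathbb Z$, where the factor $\sinh(z/2)$ kills the pole of the sum, the identity is read off by continuity.

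For ${\bf (ii)}$ I would insert the everywhere convergent expansion $\sinh(zu) = \sum_{k\ge 0}\frac{(zu)^{2k+1}}{(2k+1)!}$ into $\Omega(z) = 2\int_{0+}^{1/2}\sinh(zu)\cot(\pi u)\,{\rm d}u$ and interchange summation and integration. This is legitimate because, on $(0,\tfrac12]$, the partial sums are dominated by $\sinh(|z|u)\,|\cot(\pi u)| = \frac{\sinh(|z|u)}{u}\cdot u\,|\cot(\pi u)|$, a product of two functions each bounded on $[0,\tfrac12]$ (the second because $u\cot(\pi u)\to 1/\pi$ as $u\to 0^+$), hence integrable on a finite interval. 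This yields \eqref{X81} with $\Omega_{2k+1} = 2\int_{0+}^{1/2} u^{2k+1}\cot(\pi u)\,{\rm d}u$, a finite real number for every $k\ge 0$. The same domination permits differentiation under the integral sign, and since $D_z^{\,m}\sinh(zu)\big|_{z=0}$ equals $u^m$ when $m$ is odd and $0$ when $m$ is even, one gets $D_z^{\,m}\Omega(z)\big|_{z=0} = \Omega_m$ for odd $m$, the even-order derivatives vanishing in accordance with the oddness of $\Omega$.

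For ${\bf (iii)}$ I would feed \eqref{X4} of Proposition 3.2 into ${\bf (i)}$ (equivalently, into \eqref{B01}): for $|z|<1$ the Eisenstein sum equals $2{\rm i}\sum_{n\ge 0}(-1)^n\eta(2n+1)z^{2n}$, so $\Omega(2\pi z) = \tfrac{2}{\pi}\sinh(\pi z)\sum_{n\ge 0}(-1)^n\eta(2n+1)z^{2n}$. Rescaling $z\mapsto z/(2\pi)$ rewrites this as $\Omega(z) = \tfrac{2}{\pi}\big(\sum_{m\ge 0}\frac{z^{2m+1}}{2^{2m+1}(2m+1)!}\big)\big(\sum_{n\ge 0}\frac{(-1)^n\eta(2n+1)}{(2\pi)^{2n}}z^{2n}\big)$ for $|z|<2\pi$. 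Because $\eta(2n+1)$ is bounded (it tends to $1$), the second factor converges absolutely on $|z|<2\pi$ while the first does so on all of $\mathbb C$, so the Cauchy product is valid there; collecting the coefficient of $z^{2k+1}$, the contributions with $m+n=k$ give $\tfrac{2}{\pi}\sum_{n=0}^{k}\frac{(-1)^n\eta(2n+1)}{2^{2(k-n)+1}(2(k-n)+1)!\,(2\pi)^{2n}}$, which simplifies via $2^{2(k-n)+1}2^{2n}=2^{2k+1}$ and $\tfrac{2}{\pi}2^{-2k-1}\pi^{-2n} = 2^{-2k}\pi^{-2n-1}$ to exactly the bracketed sum in \eqref{X9}.

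Finally, ${\bf (iv)}$ is immediate from ${\bf (ii)}$: a power series with real coefficients satisfies $\overline{\Omega(z)} = \Omega(\overline z)$ throughout its (here entire) disc of convergence; alternatively one conjugates the defining integral using $\overline{\sinh w} = \sinh\overline w$ and the reality of $\cot(\pi u)$ on $(0,\tfrac12)$. Then ${\bf (v)}$ follows by writing $\Omega(x+{\rm i}y) = A+{\rm i}B$ and applying ${\bf (iv)}$ to get $\Omega(x-{\rm i}y)= A-{\rm i}B$, whence the first two equalities; setting $y=0$ gives $\Im\,\Omega(x)=0$, and evaluating \eqref{X81} at $z={\rm i}y$ gives $\Omega({\rm i}y) = {\rm i}\sum_{k\ge 0}\Omega_{2k+1}\frac{(-1)^k y^{2k+1}}{(2k+1)!}$, a purely imaginary number, so $\Re\,\Omega(\pm{\rm i}y)=0$, the sign being handled by the oddness of $\Omega$. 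The only genuinely delicate step is the interchange of summation (and of differentiation) with the improper integral in ${\bf (ii)}$, i.e.\ taming the $\cot(\pi u)$ singularity at $u=0$; once that is secured, items ${\bf (i)}$ and ${\bf (iii)}$--${\bf (v)}$ are bookkeeping with absolutely convergent series.
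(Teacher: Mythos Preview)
Your proposal is correct and follows essentially the same route as the paper: part {\bf (i)} is the rescaling of \eqref{B01}, part {\bf (ii)} is the termwise expansion of $\sinh(zu)$ under the integral (with the same interchange-of-sum-and-integral justification), and part {\bf (iii)} is the Cauchy product of the $\sinh$ series with the $\eta$-series coming from Proposition~3.2~{\bf a)}, exactly as the paper does. The only cosmetic difference is in {\bf (iv)}--{\bf (v)}: the paper expands $\sinh\bigl(u(x\pm{\rm i}y)\bigr)=\sinh(ux)\cos(uy)\pm{\rm i}\cosh(ux)\sin(uy)$ inside the integral to read off the real and imaginary parts directly, whereas you invoke the reality of the Taylor coefficients $\Omega_{2k+1}$ (or, equivalently, conjugate the integrand)---both arguments are immediate and interchangeable.
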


\begin{proof} 
Whereas property {\bf (i)} is a basic property of the paper, {\bf (ii)} follows by expanding $\sinh(uz)$ into its power series,
and substituting it into the definition of $\Omega(z)$. Indeed, 
   \[ \Omega(z) = 2 \int_{0+}^{\frac12} \sum_{k\in \mathbb N_0}\frac{(uz)^{2k+1}}{(2k+1)!}\, \cot (\pi u)\, {\rm d}u 
	              = \sum_{k\in \mathbb N_0}\Omega_{2k+1}\frac{z^{2k+1}}{(2k+1)!}\,, \]
where the legitimate exchange of summation and integration order is applied. Note that $\Omega_k = D_z^k \Omega(z)\big|_{z=0}$. 

As to {\bf (iii)}, firstly let us remark that the Taylor series of the HE--series $\mathfrak h_1(z)$ has been established in 
Proposition 3.2 {\bf a)}, precisely 
   \[ \mathfrak h_1(z) = 2{\rm i} \sum_{n \in \mathbb N_0}(-1)^n\, \eta(2n+1) z^{2n}\,,\qquad (|z|<1) .\]
By {\bf (i)}, it is
   \[ \Omega(z) = -\frac{\rm i}\pi\, \sinh\Big( \frac z2\Big)\, \mathfrak h_1\Big( \frac z{2\pi}\Big), \]
where, in turn, the convergence is assured inside the disk $|z|<2\pi$. Accordingly  
   \begin{align*}
	    \Omega(z) &= \frac2\pi \sum_{m\in \mathbb N_0} \frac{\left(\frac z2\right)^{2m+1}}{(2m+1)!}\,
			             \sum_{n\in \mathbb N_0} (-1)^n \, \eta(2n+1)\,\left(\frac z{2\pi}\right)^{2n}\\
								&= \sum_{m, n\in \mathbb N_0} \frac{(-1)^n\,\eta(2n+1)\, z^{2(m+n)+1}}{2^{2(m+n)}\pi^{2n+1}\,(2m+1)!}\,.
	 \end{align*}
Eliminating $m$ in the double sum, which becomes a simple one with respect to $k=m+n; k\in \mathbb N_0,\, 0 \leq n \leq k$ we get
	 \[	\Omega(z) = \sum_{k \in \mathbb N_0} \frac1{2^{2k}}\, \left\{\sum_{n=0}^k \frac{(-1)^n\,\eta(2n+1)}
			             {\pi^{2n+1}\,(2(k-n)+1)!}\right\}\,z^{2k+1}\, .	\] 
As to {\bf (iv)} and {\bf (v)}, observe that 
   \begin{align*}
	    \Omega(x\pm {\rm i}y) &= 2 \int_{0+}^{\frac12} \sinh u(x\pm {\rm i}y) \cot (\pi u)\, {\rm d}u \\
	                          &= 2 \int_{0+}^{\frac12} \sinh (ux) \cos(uy) \cot (\pi u)\, {\rm d}u  \\ 
                            &\qquad \pm 2{\rm i} \int_{0+}^{\frac12} \cosh (ux) \sin(uy) \cot (\pi u)\, {\rm d}u\, .
	 \end{align*}
In particular there follows
   \begin{align*}
	    \Re\,\Omega(x\pm {\rm i}y) &= 2 \int_{0+}^{\frac12} \sinh u(x\pm {\rm i}y) \cot (\pi u)\, {\rm d}u \\
	    \Im\,\Omega(x\pm {\rm i}y) &= \pm 2\int_{0+}^{\frac12} \cosh (ux) \sin(uy) \cot (\pi u)\, {\rm d}u\, .
	 \end{align*}
Now {\bf (iv)} and {\bf (v)} follow readily. 
\end{proof} 
 
Since inside the disk $|z|<2\pi$ the function $\Omega(z)$ possesses a unique Taylor expansion, both expansions 
\eqref{X81} and \eqref{X9} coincide there. So, equating the coefficients, we deduce the finite closed form expression for 
the moments of $\cot(\pi x)$, namely

\begin{corollary}
The moments $\Omega_k$ can be expressed as 
   \begin{equation} \label{X10}
	    \Omega_{2k+1} =: 2\int_{0+}^{\frac12} u^{2k+1} \cot (\pi u)\, {\rm d}u = \frac{(2k+1)!}{2^{2k}}\, 
	    \sum_{n=0}^k \frac{(-1)^n\,\eta(2n+1)}{\pi^{2n+1}\,(2(k-n)+1)!}\,, 
	 \end{equation}
for all $k \in \mathbb N_0$; thus in particular,
   \[ \Omega_1 = \frac{\eta(1)}{\pi}, \, \Omega_3 = \frac14\Big(\frac{\eta(1)}{\pi} - \frac{6\eta(3)}{\pi^3}\Big), \, 
	    \Omega_5 = \frac1{16} \Big( \frac{\eta(1)}{\pi} - \frac{20\eta(3)}{\pi^3} + \frac{120 \eta(5)}{\pi^5}\Big). \]
\end{corollary}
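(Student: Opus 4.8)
The plan is to read off \eqref{X10} from the uniqueness of the Taylor coefficients of $\Omega$ on the disc $|z|<2\pi$. Since $\Omega(z)=2\int_{0+}^{1/2}\sinh(zu)\cot(\pi u)\,\mathrm du$ is an entire function of $z$ — the only singularity of the weight $\cot(\pi u)$ sits at the endpoint $u=0$, where it is absorbed by the factor $\sinh(zu)=O(u)$ — it admits on $|z|<2\pi$ a unique power series representation. Theorem 5.4 furnishes two of them: the expansion \eqref{X81} written in terms of the moments $\Omega_{2k+1}$, and the expansion \eqref{X9} obtained from the partial fraction development {\bf (i)} together with the Taylor series of $\mathfrak h_1$ supplied by Proposition 3.2 {\bf a)}. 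Matching the coefficient of $z^{2k+1}$ in the two representations gives
   \[ \frac{\Omega_{2k+1}}{(2k+1)!}=\frac1{2^{2k}}\sum_{n=0}^{k}\frac{(-1)^n\,\eta(2n+1)}{\pi^{2n+1}\,(2(k-n)+1)!}\,, \]
and multiplying through by $(2k+1)!$ yields \eqref{X10}.

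For the closed forms of $\Omega_1,\Omega_3,\Omega_5$ I would simply specialise the finite inner sum in \eqref{X10} to $k=0,1,2$ and collect terms, recalling that the $n=0$ summand carries $\eta(1)=\sum_{m\in\mathbb N}(-1)^{m-1}/m=\log 2$; in particular this gives $\Omega_1=\eta(1)/\pi=(\log 2)/\pi$, which is consistent with $\Omega'(0)$ and with $B_1^{\sim}(\tfrac12)=-(\log2)/\pi$ from \eqref{X2}. The cases $k=1$ and $k=2$ produce three-term sums whose evaluation (clearing the factorials $3!,\,1!$ resp.\ $5!,\,3!,\,1!$ against $(2k+1)!/2^{2k}=6/4$ resp.\ $120/16$) is a one-line arithmetic check.

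There is essentially no serious obstacle here: the analytic work has already been carried out in Theorem 5.4, and what remains is the comparison-of-coefficients principle together with routine bookkeeping. The one point that deserves a word of care is the legitimacy of equating coefficients, which rests on $\Omega$ being holomorphic on $|z|<2\pi$ and on \eqref{X9} genuinely being its Taylor series there — both already guaranteed by the convergence statements accompanying Theorem 5.4 — so that granting these, \eqref{X10} and the displayed special values follow immediately.
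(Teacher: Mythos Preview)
Your proposal is correct and follows essentially the same route as the paper: the paper also argues that, since $\Omega(z)$ has a unique Taylor expansion on $|z|<2\pi$, the two series \eqref{X81} and \eqref{X9} from Theorem~5.4 must coincide, and equating the coefficients of $z^{2k+1}$ yields \eqref{X10}. Your added remarks on the entirety of $\Omega$ and the consistency check $\Omega_1=(\log 2)/\pi$ are harmless embellishments of the same argument.
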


The authors could not find representations of these important moments as a finite sum in tables of sums and integrals. However, 
there does exist a representation of $\Omega_{2k+1}$ in terms of an infinite series, namely
   \[ \Omega_{2k+1} = \frac1{2^{2k}\pi} \Bigg\{ \frac1{2k+1} + \sum_{n \in \mathbb N} \frac{(-1)^n B_{2n} \pi^{2n}}
	                    {(2n)! (2k+2n+1)!}\Bigg\}, \qquad (k \in \mathbb N_0)\, ,\] 
see \cite[p. 333, Eq. 13 d]{Grobner} and \cite[p. 428, Eq. {\bf 3.748} 2.]{GR}. 

\section{Early ideas of Bernoulli, Euler and Ramanujan; some conjectures}

In order to observe to contribution of the innovative Ramanujan but also of the great Jacob I. Bernoulli, we first need to 
observe the famous representation of the Riemann Zeta function, actually due to Euler (1735/39) \cite{Euler}, with the Bernoulli 
numbers. Indeed, we have shown (see \cite[p. 62 {\em et seq.}]{Butzer})

\begin{theorem} Let $\alpha \in \mathbb C$. Then

\noindent ${\bf a)}$ There holds for all $\alpha \neq 2m+1,\, n \in \mathbb N$
   \[ \zeta(\alpha) = - \sec \Big( \frac{\alpha\pi}2 \Big) \cdot 2^{\alpha-1}\pi^\alpha\, \frac{B_\alpha}{\Gamma(\alpha+1)}; \]
\noindent ${\bf b)}$ There holds for all $\Re(\alpha) > 1$ 
   \[ \zeta(\alpha) = {\rm cosec} \Big( \frac{\alpha\pi}2 \Big) \cdot 2^{\alpha-1}\pi^\alpha\, \frac{B_\alpha^\sim}{\Gamma(\alpha+1)}. \]
\noindent ${\bf c)}$ {\rm Counterpart of Euler's formula}. There holds for odd arguments $2m+1, m\in \mathbb N$ 
   \[ \zeta(2m+1) = (-1)^m 2^{2m} \pi^{2m-1} \, \frac{B_{2m+1}^\sim}{(2m+1)!}.\]
\noindent ${\bf d)}$ {\rm Euler's closed form representation of $\zeta(2m)$}. There holds for even arguments 
   \[ \zeta(2m) = (-1)^{m+1} 2^{2m-1} \pi^{2m} \, \frac{B_{2m}}{(2m)!}.\]
\end{theorem}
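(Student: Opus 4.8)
\medskip
\noindent\emph{Proof proposal.}
The plan is to read parts {\bf a)}--{\bf d)} as four faces of a single computation: evaluating a (generalised) Bernoulli function and its Hilbert conjugate at the origin by splitting the underlying Dirichlet-type series into its contributions from positive and negative indices, which of itself produces the factor $\zeta(\alpha)$. Recall first that the $1$-periodic Bernoulli polynomials of Section 2 carry the Fourier expansion $\mathscr B_n(x) = -\Gamma(n+1)(2\pi)^{-n}\sum_{k \neq 0}({\rm i}k)^{-n}{\rm e}^{2\pi{\rm i}kx}$; the natural object here is its extension to complex order, $\mathscr B_\alpha(x) := -\Gamma(\alpha+1)(2\pi)^{-\alpha}\sum_{k \neq 0}({\rm i}k)^{-\alpha}{\rm e}^{2\pi{\rm i}kx}$ (principal branch, $\Re(\alpha)>1$ for absolute convergence), together with its conjugate $\mathscr B_\alpha^\sim := \mathscr H_1[\mathscr B_\alpha]$, obtained exactly as in \eqref{X1} by inserting the factor $-{\rm i}\,{\rm sgn}(k)$ into each Fourier mode. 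One then sets $B_\alpha := \mathscr B_\alpha(0)$ and $B_\alpha^\sim := \mathscr B_\alpha^\sim(0)$; at $\alpha = n \in \mathbb N$ these reduce to the classical $B_n$ and to the $B_n^\sim$ of Section 2 (compare \eqref{X1}).

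First I would prove {\bf a)}. Splitting the defining series of $\mathscr B_\alpha(0)$ into $k>0$ and $k<0$ and using $({\rm i})^{-\alpha} = {\rm e}^{-{\rm i}\pi\alpha/2}$, $(-{\rm i})^{-\alpha} = {\rm e}^{{\rm i}\pi\alpha/2}$ on the principal branch gives $\sum_{k \neq 0}({\rm i}k)^{-\alpha} = \big({\rm e}^{-{\rm i}\pi\alpha/2}+{\rm e}^{{\rm i}\pi\alpha/2}\big)\zeta(\alpha) = 2\cos(\tfrac{\alpha\pi}{2})\,\zeta(\alpha)$, whence $B_\alpha = -2\,\Gamma(\alpha+1)(2\pi)^{-\alpha}\cos(\tfrac{\alpha\pi}{2})\,\zeta(\alpha)$; solving for $\zeta(\alpha)$ and writing $(2\pi)^\alpha = 2\cdot 2^{\alpha-1}\pi^\alpha$ produces precisely {\bf a)}, the $\sec(\tfrac{\alpha\pi}{2})$ there forcing the exclusion $\alpha \neq 2m+1$ (at odd integers $\cos(\tfrac{\alpha\pi}{2}) = 0 = B_{2m+1}$, so the identity degenerates). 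Part {\bf b)} is the identical reckoning with the extra factor $-{\rm i}\,{\rm sgn}(k)$: the two index blocks now differ in sign, $\sum_{k \neq 0}(-{\rm i}\,{\rm sgn}k)({\rm i}k)^{-\alpha} = {\rm i}\big({\rm e}^{{\rm i}\pi\alpha/2}-{\rm e}^{-{\rm i}\pi\alpha/2}\big)\zeta(\alpha) = -2\sin(\tfrac{\alpha\pi}{2})\,\zeta(\alpha)$, so that $B_\alpha^\sim = 2\,\Gamma(\alpha+1)(2\pi)^{-\alpha}\sin(\tfrac{\alpha\pi}{2})\,\zeta(\alpha)$, which rearranges to {\bf b)}; absolute convergence of both the conjugate and the non-conjugate series is exactly what pins the range $\Re(\alpha)>1$.

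Parts {\bf c)} and {\bf d)} are then pure specialisations. For {\bf d)} put $\alpha = 2m$ in {\bf a)}: $\sec(m\pi) = (-1)^m$, $B_{2m}$ is the classical Bernoulli number, and a one-line rearrangement gives Euler's $\zeta(2m) = (-1)^{m+1}2^{2m-1}\pi^{2m}B_{2m}/(2m)!$. This part can also be re-proved self-containedly within the present paper: expand $\pi z\cot(\pi z) = z\,\varepsilon_1(z)$ from \eqref{A1} as $1 - 2\sum_{m \geq 1}\zeta(2m)z^{2m}$ on $|z|<1$, and independently as $1 + \sum_{m \geq 1}(-1)^m(2\pi z)^{2m}B_{2m}/(2m)!$ by using $\cot\theta = {\rm i} + 2{\rm i}/({\rm e}^{2{\rm i}\theta}-1)$ together with the generating function \eqref{B-1} at $x = 0$, then compare coefficients. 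For {\bf c)} put $\alpha = 2m+1$ in {\bf b)}, where ${\rm cosec}\big(\tfrac{(2m+1)\pi}{2}\big) = (-1)^m$ and $B_{2m+1}^\sim = B_{2m+1}^\sim(0)$; equivalently {\bf c)} is the relation already isolated in Proposition 3.2, equation \eqref{X6}, after clearing the nonzero factor $(2^{-2m}-1)$ via $B_{2m+1}^\sim(\tfrac12) = (2^{-2m}-1)B_{2m+1}^\sim(1)$ and $B_{2m+1}^\sim(1) = B_{2m+1}^\sim(0)$.

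The crux is not any delicate analysis -- the only analytic ingredients are the Fourier expansion of the Bernoulli polynomials and the convergence of the $\zeta$-series -- but rather fixing, once and for all, the branch of $({\rm i}k)^{-\alpha}$ and the precise normalisation of $B_\alpha$, $B_\alpha^\sim$, so that (i) they genuinely reduce to the classical numbers at integer $\alpha$ (so that {\bf c)}, {\bf d)} really follow and agree with the $B_n^\sim$ fixed in Section 2), and (ii) the degenerate arguments are accounted for correctly: $\sec(\tfrac{\alpha\pi}{2})$ has poles at the odd integers, which is why {\bf a)} must drop $\alpha = 2m+1$, whereas dually ${\rm cosec}(\tfrac{\alpha\pi}{2})$ is singular at the even integers, so in {\bf b)} the values $\alpha = 2m$ must be read as removable $0/0$-limits. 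Arranging that the two exclusion sets dovetail -- {\bf a)} delivering the even-integer closed form and {\bf b)} the odd-integer one -- is the single bookkeeping step that requires care.
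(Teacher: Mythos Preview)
Your argument is correct. The paper does not actually prove Theorem~6.1 in the article; it is quoted from \cite[p.~62 \emph{et seq.}]{Butzer}, and Section~6 only \emph{sketches} the route: define $\mathscr B_\alpha$ by the Fourier series \eqref{H2}, pass to its Hilbert transform $\mathscr B_\alpha^\sim$, and read off the $\zeta$-values at $x=0$. That is precisely what you do, and your bookkeeping with the principal branch of $({\rm i}k)^{-\alpha}$ is clean; the two blocks $k>0$ and $k<0$ combine to $2\cos(\tfrac{\alpha\pi}{2})\zeta(\alpha)$ and, after the $-{\rm i}\,{\rm sgn}(k)$ twist, to $-2\sin(\tfrac{\alpha\pi}{2})\zeta(\alpha)$, giving {\bf a)} and {\bf b)} immediately. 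Your self-contained re-derivation of {\bf d)} via $\pi z\cot(\pi z)=z\,\varepsilon_1(z)$ and the generating function \eqref{B-1} is also valid and is the classical Euler argument; it is a nice addition since the paper only cites it.

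Two small remarks. First, when you specialise {\bf b)} at $\alpha=2m+1$ you obtain $\zeta(2m+1)=(-1)^m 2^{2m}\pi^{2m+1}B_{2m+1}^\sim/(2m+1)!$, with exponent $2m+1$ on $\pi$; this is also what one gets by clearing $(2^{-2m}-1)$ from \eqref{X6} as you indicate. The exponent $2m-1$ printed in part {\bf c)} (and repeated at the top of Section~4) is a typographical slip in the paper, not a flaw in your derivation. Second, your parenthetical that at even integers {\bf b)} is a removable $0/0$ is right in spirit but irrelevant to the theorem as stated, since {\bf b)} is only claimed for $\Re(\alpha)>1$ and the even integers are handled by {\bf a)}/{\bf d)}; you need not labour that point.
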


Firstly, the numbers $B_n = B_n(0)$ occuring in Euler's representation {\bf d)}, defined in terms of the Bernoulli 
polynomials $B_n(x)$ {\em via} their exponential generating function \eqref{B-1}, where introduced by Jacob I. Bernoulli 
-- prior to 1695 -- published posthumously in 1713 in the second chapter of his {\em Ars Conjectandi} \cite{JIB}. 

In the counterpart {\bf c)} for all arguments, the $B_n$ has been replaced by the conjugate Bernoulli numbers $B_{2m+1}^\sim$, 
defined in terms of the Hilbert transform. 

As to Ramanujan, he introduced the "sign--less" fractional Bernoulli numbers $B_\alpha^*$ in terms of
   \[ B_\alpha^* = \frac{2\Gamma(\alpha+1)}{(2\pi)^\alpha}\, \zeta(\alpha)\,,\]
so that $B_{2m}^* = (-1)^{m+1}\, B_{2m} > 0$ for $\alpha = 2m, m \in \mathbb N$. Thus, he avoided to find substitute for $(-1)^{m+1}$ 
in part {\bf d)} of the previous theorem (see Berndt \cite[p. 125]{Berndt1}).  

There exists a short contribution by J.W.L. Glaisher \cite{JWL} who defined fractional sign--less Bernoulli numbers, similarly as 
Ramanujan did, {\em via} the "Euler" formula
   \[ B_\alpha^* = \frac{2\Gamma(2\alpha+1)}{(2\pi)^\alpha}\, \zeta(2\alpha)\,.\]
In fact, Euler himself (c.f. \cite[p. 351]{Berndt})  had already proceeded in this way for the particular case $\alpha = \tfrac32$ and 
$\alpha = \tfrac52$. He set 
   \begin{align*}
	    p &= \frac3{2\pi^3} \Big\{ 1 + \frac1{2^3} + \frac1{3^3} + \cdots \Big\} \approx 0.05815227, \\
      q &= \frac{15}{2\pi^5} \Big\{ 1 + \frac1{2^5} + \frac1{3^5} + \cdots \Big\} \approx 0.025413275,
	 \end{align*}
and defined
   \[ B_{\frac32} = p, \qquad B_{\frac52} = q\,.\]
The problem of evaluating $\zeta(2m+1)$, at odd integer values, first formulated by P. Mengoli in 1650 (see \cite[p. 125]{Berndt}), 
cannot be solved by replacing the $B_{2m}$ in part {\bf d)} by $B_{2m+1}$ as $B_{2m+1} \equiv B_{2m+1}(0) = 0$ for all 
$m \in \mathbb N$. There exist further articles published more recently, namely by B\"ohmer \cite{Bohmer}, Sinocov \cite{Sinocov}, 
Jonqui\`ere \cite{Jonquiere} and Mus\`es \cite{Muses}.

At Aachen, we discovered the structural closed form solution of Mengoli's question by replacing $B_{2m+1}$ by the {\em conjugate} 
Bernoulli numbers $B_{2m+1}^\sim(0)$, which do {\em not} vanish for all odd integer values. 

The question arises as to what did Ramanujan (and Glaisher) actually mean with $B_\alpha^*$ (and $B_\alpha$)? For this purpose 
at first some words concerning the $B_\alpha(x)$ with fractional indices, not discussed in Section 2. At Aachen the 
{\it Bernoulli functions} $B_\alpha(x)$ with index $\alpha \in \mathbb C, \Re(\alpha)>0$ were defined first for $x \in [0,1)$ by 
$B_\alpha(x) = \mathscr B_\alpha(x)$, where $\mathscr B_\alpha(x)$ is the periodic Bernoulli function, given by
   \begin{equation}  \label{H2}
	    \mathscr B_\alpha(x) = -2\Gamma(\alpha+1) \sum_{k \in \mathbb N} \frac{\cos\pi(2 kx - \frac\alpha2)}{(2\pi k)^\alpha}, 
			                       \qquad (x \in \mathbb R),
	 \end{equation}
with $x \neq 0$ if $\Re(\alpha) \in (0,1]$. In addition, for $\Re(\alpha)>1$, $\mathscr B_\alpha := \mathscr B_\alpha(0)$, is called 
$\alpha$--th {\it Bernoulli number}.

Now, $\mathscr B_\alpha(x), x \in \mathbb R$ is a holomorphic function of $\alpha$ for $\Re(\alpha)>1$, even holomorphic for 
$\Re(\alpha)>0$ when $x \in \mathbb R \setminus \mathbb Z$. The periodic functions $\mathscr B_\alpha(x)$, $x \in [0,1)$ were then 
extended to $\mathbb R$ such that $\mathscr B_\alpha(x)$ interpolates the classical Bernoulli polynomials $B_n(x)$ for all $\alpha = n 
\in \mathbb N$, and were then denoted by $B_\alpha(x)$, for all $x \in \mathbb R$. They were then extended to $B_\alpha(z)$, with  
$\mathbb C \setminus \mathbb R_0^-$, for arbitrary $\alpha \in  \mathbb C$ (see below). The $B_\alpha(z)$ led the way to the 
assertions {\bf a)} and {\bf d)} of Theorem 6.1, the former for $\alpha \neq 2m+1$, the latter for $\alpha = 2m$.  

In order to solve parts {\bf b)} and {\bf c)}, we introduced the Hilbert transform of $\mathscr B_\alpha(x)$, $\mathscr B_\alpha^\sim(x) 
\equiv \mathscr H_1 \big[\mathscr B_\alpha(\cdot)\big](x)$, set up the Fourier series of $\mathscr B_\alpha^\sim(x)$, and proceeded to 
$B_\alpha^\sim(x)$, their periodic extensions to $\mathbb R$. 

The Bernoulli functions, first defined for all $\Re(\alpha)>0$ and $x \in \mathbb R$, were then extended by analytic continuation 
to all $\alpha \in \mathbb C$ and $x=z \in \mathbb C \setminus \mathbb R_0^-$ by the contour integral representation 
   \[ B_\alpha(z) = \frac{\Gamma(\alpha+1)}{2\pi {\rm i}} \int_{\mathfrak C} \frac{u{\rm e}^{uz}}{{\rm e}^u-1}\, 
	                  \frac{{\rm d}u}{u^{\alpha+1}}, \qquad (\Re(z) >0, \, \alpha \in \mathbb C);\] 
here $\mathfrak C$ denotes the positively oriented loop around the negative real axis $\mathbb R^-$, which is composed of a circle 
$C(0; 2r)$ centered at the origin and of radius $2r\, (0 < r < \pi)$ together with the lower and upper edges $C_1$ and $C_2$ of
the complex plane cut along the negative real axis $\mathbb R^-$ (see  Figure 3.). 

\begin{figure}[!ht]
    \begin{center}
        \includegraphics[width=0.64 \textwidth]{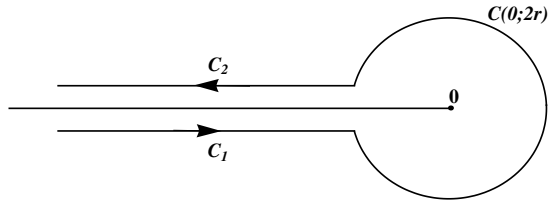}    
    \caption{Integration path $\mathfrak C = C_1 \cup C(0; 2r) \cup C_2$}
		\end{center} 
\end{figure} 

The $B_\alpha(x)$ coincide with the classical Bernoulli polynomials in the case $\alpha = n \in \mathbb N_0$. Indeed, according to 
the Cauchy integral formula for derivatives, noting that $C_1 = -C_2$, one has for $\Re(z)>0$ 
   \[ \frac{\Gamma(\alpha+1)}{2\pi {\rm i}} \int_{\mathfrak C} \frac{u{\rm e}^{uz}}{{\rm e}^u-1}\, 
	                  \frac{{\rm d}u}{u^{\alpha+1}} = \left(\frac{\rm d}{{\rm d}u} \right)^n_{u=0} 
										\left( \frac{u{\rm e}^{uz}}{{\rm e}^u-1}\right) = B_n(z),\]
the last equality following by the defining generating function.

This definition was then extended to $z \in \mathbb C \setminus \mathbb R_0^-$, and it turned out to be consistent with the 
classical $B_n(z)$, with which they coincide for $\alpha = n$. 

It is our conjecture that the Bernoulli functions $B_\alpha^\sim(z)$ can also be defined for $\alpha \in \mathbb C$ and 
$z \in \mathbb C \setminus \mathbb R_0^-$ in terms of the {\it contour integral} 
   \begin{equation} \label{H3}
	    B_\alpha^\sim(z) = - \frac{\Gamma(\alpha+1)}{2\pi {\rm i}} \int_{\mathfrak C} \frac{u{\rm e}^{uz}\,\Omega(u)}{{\rm e}^u-1}\, 
	                       \frac{{\rm d}u}{u^{\alpha+1}}.
	 \end{equation}
It is based upon the fact that, formally,
   \begin{align*} 
	    \mathscr H_1\Big[\frac{u{\rm e}^{u\,\cdot}}{{\rm e}^u-1}\Big](z) &= \frac{u}{{\rm e}^u-1} {\rm PV} 
			           \int_{- \frac12}^{\frac12} {\rm e}^{(z-y)u}\, \cot(\pi y)\, {\rm d} u \\
							&= - \frac{u{\rm e}^{zu}}{{\rm e}^u-1} {\rm PV} 
			           \int_{- \frac12}^{\frac12} {\rm e}^{uy}\, \cot(\pi y)\, {\rm d} y 
							 = - \frac{u{\rm e}^{zu}}{{\rm e}^u-1}\, \Omega(z). 
	 \end{align*}
Let us finally add that given the definition of $B_\alpha^\sim(z)$ {\em via} \eqref{H3} for $\alpha = 2j+1$, and assuming that  
it is correct, one can surely deduce the Fourier expansion of $B_n^\sim(z)$ (found in \cite[p. 32]{Butzer}), using the calculus 
of residues (see e.g. Saalsch\"utz \cite[p. 27]{Saal}, also see \cite[p. 331]{Remmert1}). In that case, the validity of part {\bf b)} 
of Theorem 6.1 could be extended from $\Re(\alpha)>1$ to all $\alpha \in \mathbb C$ (just as for part {\bf a)}). 

Hopefully the values of \eqref{H3} at $z=0$, that is $B_\alpha^\sim(0)$, could then give us more information concerning the 
possible irrationalities of $\zeta(5), \zeta(7), \zeta(9), \cdots$.  

Substituting the Taylor series representation \eqref{X81} of $\Omega(u)$ into $B_\alpha^\sim(z)$ of \eqref{H3} for $\alpha = 2j+1, 
j \in \mathbb N_0$, and interchanging the integral and sum, there results the representation
   \begin{equation} \label{X11}
	    B_{2j+1}^\sim(z) = \frac{\Gamma(2j+2)}{2\pi {\rm i}} \sum_{k \in \mathbb N_0} \frac{\Omega_{2k+1}}{(2k+1)!}\,
	                       \int_{\mathfrak C} \frac{{\rm e}^{uz}}{1-{\rm e}^u} \frac{u^{2k}}{u^{2j}}\, {\rm d}u, 
	 \end{equation}
the latter integral being easier to evaluate than $B_{2j+1}^\sim(z)$ in terms of the original \eqref{H3}. In turn, as inside 
$|u|<2\pi$ the Bernoulli polynomials' generating function 
   \[ \frac{u\,{\rm e}^{uz}}{{\rm e}^u-1} = \sum_{n \in \mathbb N_0} B_n(z)\, \frac{u^n}{n!}\,,\]
by the Cauchy's integral formula we have
   \begin{align} \label{X16}
	    \mathscr J_{j, k}(z) &= \int_{\mathfrak C} \frac{u^{2k+1}\, {\rm e}^{uz}}{{\rm e}^u-1} \frac{{\rm d}u}{u^{2j+1}}  
			                      = \frac{2\pi {\rm i}}{\Gamma(2j+1)} \left(\frac{\rm d}{{\rm d}u} \right)^{2j}_{u=0} 
										          \left( u^{2k}\cdot \frac{u\,{\rm e}^{uz}}{{\rm e}^u-1}\right) \\
													 &= \frac{2\pi {\rm i}}{\Gamma(2j+1)} \left(\frac{\rm d}{{\rm d}u} \right)^{2j}_{u=0} 
										          \left( \sum_{n \in \mathbb N_0} B_n(z)\, \cdot \frac{u^{2k+n}}{n!}\right)\nonumber \\
													 &= \frac{2\pi {\rm i}}{(2j)!} \sum_{n \in \mathbb N_0} \frac{B_n(z)}{n!}\,
													    \frac{(2k+n)!}{(2k+n-2j)!} \,\big(u^{2k+n-2j}\big)_{u=0}\nonumber \\
													 &= \frac{2\pi {\rm i}}{(2j-2k)!}\, B_{2j-2k}(z)\, . \nonumber 
	 \end{align}
So, employing Corollary 5.5, we conjecture that $B_{2j+1}^\sim(z)$ can be represented as the 
following  double finite sum, involving the $B_{2j-2k}$ and the $\eta(2n+1)$, as 
   \begin{align} \label{X17}
	    B_{2j+1}^\sim(z) &= - \frac{\Gamma(2j+2)}{2\pi {\rm i}} \sum_{k \in \mathbb N_0} \frac{\Omega_{2k+1}}{(2k+1)!}\,
			                      \mathscr I_{j,k}(z) \nonumber\\
		                   &= - \frac{(2j+1)!}\pi \sum_{k=0}^j \frac{B_{2j-2k}(z)}{4^k\,(2j-2k)!} \,
											      \sum_{n=0}^k \frac{(-1)^n\,\eta(2n+1)}{\pi^{2n}\,(2(k-n)+1)!}\,. 
	\end{align}

\section*{A portait of Godfrey Harold Hardy (1877--1947)} 

Born February 7, 1877 in Cranleigh,  Surrey, Hardy graduated from Trinity College, Cambridge in 1899, became a fellow at 
Trinity in 1900, and lectured in mathematics there from 1906 to 1919, since 1914 as  Cayley Lecturer. In 1919 he was appointed to 
the Savillian Chair of Geometry at the University of Oxford, spent 1928--29 as visiting professor at Princeton, returned back to 
Oxford, and finally became Sadleirian Professor of Pure Mathematics in Cambridge in 1931. There he remained until his death on 
December 1, 1947, after having retired in 1942. 

Among Hardy's early works  are his popular eleven books, among them {\it Integration of Functions of a Single Variable}, 
CUP (1905) \footnote{CUP - abbreviation of Cambridge University Press, Cambridge, UK, while OUP is the abbreviation for Oxford 
University Press, Oxford, UK}; {\it A Course of Pure Mathema\-tics}, CUP (1908); (10th Edit. 2008, with T. Korner); followed by 
{\it The General Theory of Dirichlet's Series}, CUP (1915), with M. Riesz; {\it Inequalities}, CUP (1934), with J.E. 
Littlewood and G.P\'olya (reprinted in 1952); {\it An Introduction to the Theory of Numbers}, OUP (1938), with E.M. Wright 
(6th Edit. in 2008 with D.R. Heath--Brown and J.H. Silverman); {\it A Mathematician's  Apology}, CUP (1940, 2004); {\it Fourier 
Series}, CUP (1944), with W.W. Rogosinski; {\it Divergent Series}, OUP (1949). Hardy was the author or coauthor of 
more than 300 papers and the recipient of numerous honours. His doctoral students included L. Bosanquet, M. Cartwright, U. Haslam-Jones, 
A.C. Offord,  R. Rado, R. Rankin, K.A. Rau, D. Spencer and E. Titchmarsh. Hardys collaboration with J.E. Littlewood, which set in 
1911 and extended over 30 years, brought fresh impetus into his work. Their collaboration is among the most famous as such in 
mathematics history. 

The sequence of essays by his former students in [{\it J. London Math. Soc.} {\bf 25} (1950)  81--101] is an excellent source on 
Hardy's place in mathematics. See also G.H. Hardy, {\it Collected Papers}, 7 vols. (Clarendon Press,  Oxford, 1966-1979).

According to MacTutor's article on Hardy: "... Hilbert was so concerned that Hardy was not being  properly treated (while living at 
Cambrige) that he wrote to the Master of the College pointing out that {\it the best mathematician in England} should have the 
best rooms." 

Harald Bohr, who stood in close contact with Hardy, assessing the leadership of Hardy and Littlewood in English research (1947), 
wrote "I may report what an excellent colleague once jokingly said: "Nowadays, there are only three really great English 
mathematicians: Hardy, Littlewood, and Hardy--Littlewood." "

The following quotations give an interesting view of Hardy's  thoughts: "I am obliged to interpolate some remarks on a very 
difficult subject: PROOF and its importance in mathematics. All physicists, and a good many quite reputable mathe\-maticians, 
are contemptuous about proof. I heard Professor Eddington, for example, mention that proof, as pure mathematicians understand it, 
is really quite uninteresting and unimportant, and that no one who is really certain that he has found something good should 
waste his time looking for proof." While reviewing the question of the reality of nature from a mathematical viewpoint, 
G.H. Hardy stated: "... I will state my own 
position dogmatically in order to avoid minor misapprehensions. I believe that mathematical reality lies outside us, that our 
function is to discover or observe it, and that theorems which we prove, and which we describe grandiloquently as our "creations" 
are simply our notes of our observations. This view has been held in some form or another, from Plato onwards..."

According to the appraisal of the Trinity College Chapel, Hardy "was universally recognized as pre-eminent among the world's best 
mathematicians"; "As his most important influence Hardy cited the self-study of {\it Cours d'Analyse de l'Ecole Polytechnique} by the 
French mathematician Camille Jordan ... Hardy  is credited with reforming British mathematics by bringing rigour into it, which 
was previously a  characteristic of French, Swiss and German mathematics. British mathematics had remained largely in the 
tradition of applied mathematics, in thrall to the reputation of Isaac Newton (see Cambridge  Mathematical Tripos). Hardy was more 
in tune with the {\it cours d'analyse} methods dominant in France, and aggressively promoted his conception of pure mathematics, 
in particular  against the  hydrodynamics which was an important part of Cambridge  mathematics." 

More significant than Hardy's collaboration with Littlewood was that with Srinivasa Aiyangar Ramanujan, born 1887 at Erode, 
Madras Presidency \footnote{Today Tamil Nadu.} (of a Tamil Brahmin family), a self--taught and obsessive shipping clerk from Madras (see 
\cite{Alladi}). In 1913 he sent a nine--page paper to Hardy, dealing with two remarkable, novel infinite series of hypergeometric type  
(related to research of Euler and Gauss), and continued fractions. Hardy was so amazed that he commented to Littlewood that Ramanujan 
was "a mathematician of highest quality, a man of altogether exceptional originality and power". Hardy brought him to Cambridge 
(the well--established H.F. Baker and E.W. Hobson had returned the papers without comment), made him aware of modern mathematics 
and so provided  a solid  foundation to Ramanujan's inventiveness. They became friends, collaborated (called  "the one romantic 
incident in my life" by Hardy) and wrote five remarkable papers together. 

Ramanujan made extraordinary contributions to mathematical analysis, number theory, continued fractions and infinite series, 
rediscovered known theorems of Bernoulli, Euler, Gauss and Riemann. He conjectured or proved nearly 3900 theorems, identities and 
equations. Hardy regarded the "discovery of Ramanujan" as "his greatest contribution to mathematics",  and assailed Ramanujan's 
natural geniuity, being on the same league as Euler and Gauss. In his book {\it Ramanujan's twelve lectures on subjects suggested 
by his life  and work} (Cambridge, 1940), Hardy placed an everlasting monument for Ramanujan who had died too young. Since 
Bernoulli polynomials and numbers play an essential role in the present paper, and Ramanujan wrote his 
first formal paper (at age 17, but published it later) [Some properties of Bernoulli's numbers, {\it J. Indian Math. Soc.} 
{\bf 3} (1911) 219--234] this paper is dedicated to Hardy, his discoverer. For an overview of Bruce Berndt's excellent work 
in matters Ramanujan, see his {\it An overview of Ramanujan's notebooks} \cite{Berndt1} and {\it Ramanujan, his lost notebooks, 
its importance} \cite{Berndt2}; also see \cite{Alladi}.

For his lecture tour of ten universities in UK, which was kindly organized by Lionel Cooper (1915--1979), P.L.B. offered 
five possible topics, one being "On some theorems of Hardy, Littlewood and Titchmarsh". Surprisingly, no university in 
the England or Scotland picked it. At a special dinner at Chelsea College at the beginning of the tour, in the presence of 
about a dozen of London's major analysts, one remarked that a reason could be that Hardy is/was no longer regarded as 
Britain's top mathematician at the time. Even students (whom P.L.B. also regarded as great British analysts) were assigned to 
the same category. Peculiarly, no one present reacted to the astonishing assertion or countered it.

Both authors are grateful to Maurice Dodson for inviting them to his conference Fourier Analysis and Applications, held 
at York in 1993, where both experienced three of Britain's best analysts at the time, namely James (Jim) Gourlay Clunie 
(1926--2013) \cite[pp. 108--109]{BSS}, Walter Hayman and Frank Bonsall (1920--2011). The former two, spent their 
"retirement" at York University, the latter at the spa town of Harrogate. 
\begin{center}
* \qquad \qquad * \qquad \qquad *
\end{center}
In memory of Pater Wilhelm Brabender, OMI (1879--1945), a paternal great-uncle of P.L.B., an Oblate missionary 
in Saakatchevan and  British Columbia (Canada) from 1905 to 1931, who preached in French, German and English, was an authority on 
Cree  ethnography, and Rev. Pater Julius Pogany, S.J. (1909-1986), paternal uncle of T.K.P., who was a Jesuit 
missionary and teacher at St. Aloysius' College in Galle (Sri Lanka) from 1949 to 1963 (teaching in both English and Sinhalese), 
had received his MSc in Theoretical Mathematics and Physics at Royal Yugoslav University in Zagreb in 1936, and had completed his Jesuit education at 
the Pontifical Gregorian University, Rome, in 1944. 

\section*{Acknowledgments} 

The authors express their thanks to the referee for his careful study of the manuscript. Also they express deep gratitude to 
Djurdje Cvijovi\'c (Vin\v ca Nuclear Research Institute, Belgrade) for his 
valuable time spent in trying to solve the author's (probably unsolvable) conjecture on expressing the $\coth$ function as a 
linear combination of digammas, as well as to Gradimir V. Milovanovi\'c (Mathematical Institute SASA, Belgrade) for preparing 
Figures 1 and 2. The authors also wish to thank Rudolf L. Stens  (Lehrstuhl A f\"ur Mathematik, RWTH Aachen) for scanning very many 
hand--written pages of P.L.B., in the course a year or so, and emailing them all to T.K.P. 
\allowdisplaybreaks

\end{document}